\newtheorem{theorem}{Theorem}
\newtheorem{proposition}[theorem]{Proposition}
\newtheorem{lemma}[theorem]{Lemma}
\newtheorem{corollary}[theorem]{Corollary}
\newtheorem{definition}{Definition}
\title
{Remarks on Suzuki's Knot Epimorphism Number}
\author{
Jim Hoste\\
Pitzer College\\
\\
Joshua Ocana Mercado\\
Loyola Marymount University\\
\\
Patrick D.~Shanahan\\
Loyola Marymount University
}
\begin{document}

\maketitle

\begin{abstract}A partial order on prime knots can be defined by declaring $J\ge K$ if there exists an epimorphism from the  knot group of  $J$ onto the knot group of $K$. Suppose that $J$ is a 2-bridge knot  that is strictly greater than  $m$ distinct, nontrivial knots. In this paper we determine a lower bound on the crossing number of $J$ in terms of $m$. Using this bound we answer a question of Suzuki regarding the 2-bridge epimorphism number $\mbox{EK}(n)$ which is the maximum number of nontrivial  knots  which are strictly smaller than some 2-bridge knot with crossing number $n$.   We establish our results using techniques associated to parsings of a continued fraction expansion of the defining fraction of a 2-bridge knot.
\end{abstract}
\section{Introduction}

Given two knots $J$ and $K$ in $S^3$, an interesting question in knot theory, and one which has received a great deal of attention, is whether there exists an epimorphism from the fundamental group of the complement of $J$ onto the fundamental group of the complement of $K$. The existence of such an epimorphism defines a partial order on the set of prime knots and we write $J\ge K$ if such an epimorphism exists. The relation is clearly reflexive and transitive.  Proving it is antisymmetric is nontrivial. Suppose that $\phi: \pi_1(S^3-J)\to  \pi_1(S^3-K)$ and $\rho: \pi_1(S^3-K)\to  \pi_1(S^3-J)$ are epimorphisms. Then, the composition $\rho\circ\phi$ is an isomorphism because knot groups are Hopfian (see \cite{K}, Lemma~14.2.5). Hence $\phi$ is an isomorphism and $J=K$ because prime knots are determined by their knot groups \cite{W}.

It is easy to obtain examples where $J\ge K$. For example, if $J$ is a periodic knot with quotient knot $K$, then the quotient map induces the desired epimorphism. Torus knots provide special cases of this. For example, the $(2,15)$-torus knot $T(2,15)$ has periods of both 3 and 5, with quotients  $T(2,5)$ and $T(2,3)$, respectively. Note that in these examples, the crossing number of  $T(2,15)$ is 15, which is three times as big as the crossing number of  $T(2,5)$. If it were always the case that the crossing number of $J$ is at least 3 times the crossing number of $K$ whenever $J> K$, then this would provide a proof of {\sl Simon's Conjecture}, that a knot group can only map onto finitely many other non-trivial knot groups. While Simon's Conjecture is known to be true \cite{AY}, it is not true that the bigger knot must always have 3 times as many crossings as the smaller knot, for Kitano and Suzuki have shown that the 8-crossings knots $8_5, 8_{10}, 8_{15},8_{18}, 8_{19}, 8_{20}$ and $8_{21}$ are all greater than or equal to the trefoil knot $3_1$ \cite{KS}. However, these 8-crossing knots are all 3-bridge knots, and in  \cite{S}, Suzuki shows that if one restricts to the class of 2-bridge knots then the (strictly) bigger knot does indeed always have 3 times as many crossings as the smaller knot.

Focussing on the class of 2-bridge knots, Suzuki defines the {\it 2-bridge epimorphism number} $\text{\rm EK}(n)$ to be the largest number of distinct nontrivial knots which are strictly less than some 2-bridge knot with  crossing number $n$. An important result is that if $J\ge K$ and $J$ is a 2-bridge knot, then $K$ must also be a 2-bridge knot \cite{BBRW}. Thus, to compute $\text{\rm EK}(n)$ we need only count how many 2-bridge knots are smaller than each 2-bridge knot with crossing number $n$.  Examining all 2-bridge knots to 30 crossings, Suzuki determined that
\begin{equation}\label{Suzuki data}\text{\rm EK}(n)=\left \{\begin{array}{ll}
0&n=3,4,5,6,7,8\\
1&n=9,10,11,12,13,14,18,19,20,24\\
2&n=15,16,17,21,22,23,25,26,27,28,29,30.
\end{array}
\right .
\end{equation}
Because  the torus knot $T(2,45)$ is strictly larger than  $T(2,3), T(2,5), T(2,9)$, and $T(2,15)$, we have $\text{\rm EK}(45)\ge 4$. Suzuki then asked what happens between 31 and 45 crossings? How many crossings  must a 2-bridge knot  have  in order to be strictly larger than 3 or more nontrivial knots? In this paper we answer this question by proving the following theorem.

\begin{theorem}\label{main theorem} Suppose  $J$ is a 2-bridge knot which is strictly greater than $m$ distinct nontrivial knots. Then $J$ has at least $c_m$ crossings where $c_m$ is the smallest, positive, odd integer with at least $m$ positive, nontrivial, proper divisors.
\end{theorem}

\begin{table}[h]
\begin{center}
\begin{tabular}{c|cccccccccccccc}
$m$ & 1 & 2 & 3 & 4 & 5 & 6 & 7 & 8 & 9 & 10 & 11 & 12 &13 & 14  \\ \hline
$c_m$ & 9 & 15 & 45 & 45 & 105 & 105 & 225 & 315 & 315 & 315 & 945 & 945 & 945 & 945
\end{tabular}
\end{center}
\caption{Values of $c_m$ for $1 \le m \le 14$. }
\label{c_m values}
\end{table}

 Values of $c_m$ for small values of $m$ are given in Table~\ref{c_m values}. Thus, we can answer one of Suzuki's questions (Problem 4.6 of \cite{S}):  A 2-bridge knot must have at least 45 crossings in order to be strictly greater than three nontrivial knots. Interestingly, the answer is also 45 crossings in order to be strictly greater than four nontrivial knots. However, the required number of crossings for a 2-bridge knot to be strictly greater than five distinct nontrivial knots jumps to 105. Thus $\text{\rm EK}(45)=4$. More generally, we have the following corollary to Theorem~\ref{main theorem}.

\begin{corollary}\label{EK inverse}  The epimorphism number $\mbox{\rm EK}(c_m)=m$ if and only if $c_{m+1} > c_m$.
\end{corollary}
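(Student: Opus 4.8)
The plan is to prove the two directions separately, using Theorem~\ref{main theorem} for the upper bound on $\mathrm{EK}(c_m)$ and an explicit torus-knot construction for the matching lower bound. First I would record the elementary observation that the sequence $(c_m)$ is nondecreasing: every integer having at least $m+1$ positive nontrivial proper divisors has at least $m$ of them, so the set over which one minimizes to define $c_{m+1}$ is contained in the one defining $c_m$, whence $c_m \le c_{m+1}$. In particular the only alternative to the hypothesis $c_{m+1} > c_m$ is $c_{m+1} = c_m$, so the corollary may be restated as: $\mathrm{EK}(c_m) = m$ precisely when $c_{m+1} \ne c_m$.

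For the lower bound I would use that for odd $n \ge 3$ the $(2,n)$-torus knot $T(2,n)$ is a $2$-bridge knot with crossing number exactly $n$, and that $T(2,n)$ admits a cyclic symmetry of order $n/d$ for each divisor $d \mid n$ whose quotient knot is $T(2,d)$; as noted in the introduction, the quotient map induces an epimorphism $\pi_1(S^3 - T(2,n)) \twoheadrightarrow \pi_1(S^3 - T(2,d))$. If $d$ is a nontrivial proper divisor of $n$, then $1 < d < n$, and since $n$ is odd $d$ is odd with $d \ge 3$, so $T(2,d)$ is nontrivial; moreover $T(2,d)$ is distinct from $T(2,n)$ (different crossing numbers) and from $T(2,d')$ for $d' \ne d$ (torus knots are classified by their parameters). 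Hence $T(2,n)$ is strictly greater than the distinct nontrivial knots $\{\,T(2,d) : d \text{ a nontrivial proper divisor of } n\,\}$, giving $\mathrm{EK}(n) \ge$ (number of nontrivial proper divisors of $n$). Taking $n = c_m$ yields $\mathrm{EK}(c_m) \ge m$, and if $c_{m+1} = c_m$ then $c_m$ has at least $m+1$ nontrivial proper divisors, so in fact $\mathrm{EK}(c_m) \ge m+1$.

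With these pieces the corollary follows quickly. If $c_{m+1} > c_m$, then no $2$-bridge knot $J$ with crossing number $c_m$ can be strictly greater than $m+1$ or more distinct nontrivial knots, since Theorem~\ref{main theorem} would then force $J$ to have at least $c_{m+1} > c_m$ crossings, a contradiction; hence $\mathrm{EK}(c_m) \le m$, and together with $\mathrm{EK}(c_m) \ge m$ this gives $\mathrm{EK}(c_m) = m$. Conversely, if $c_{m+1} = c_m$, then the construction above gives $\mathrm{EK}(c_m) \ge m+1 > m$, so $\mathrm{EK}(c_m) \ne m$. The substantive input here is Theorem~\ref{main theorem}; the only points requiring care are the verification that the torus-knot quotient maps really are epimorphisms onto nontrivial, pairwise-distinct knot groups, so that the bound $\mathrm{EK}(c_m) \ge m$ is genuinely realized, and the harmless observation that $c_m$ odd forces every nontrivial proper divisor of $c_m$ to be at least $3$ (so that $T(2,d)$ is never the unknot).
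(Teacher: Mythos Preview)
Your proof is correct and follows essentially the same route as the paper's: both use the torus knot $T(2,c_m)$ to realize the lower bound $\mathrm{EK}(c_m)\ge m$ (and $\ge m+1$ when $c_{m+1}=c_m$), invoke Theorem~\ref{main theorem} for the upper bound, and note that $(c_m)$ is nondecreasing so the dichotomy is between $c_{m+1}>c_m$ and $c_{m+1}=c_m$. Your write-up is somewhat more explicit about why the $T(2,d)$ are nontrivial and pairwise distinct, but the structure is identical.
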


\begin{proof} The torus knot   $T(2, c_m)$ has crossing number  $c_m$  and is clearly greater than or equal to  $T(2,d)$ if $d$ is a divisor of $c_m$. Since $c_m$ has at least $m$ distinct proper divisors, it follows that  $\text{\rm EK}(c_m)\ge m$. On the other hand, if $J$ is a 2-bridge knot that is strictly greater than $m+1$  non-trivial, 2-bridge knots, then by Theorem~\ref{main theorem}  we have $\mbox{cr}(J) \ge c_{m+1}$. 
If $c_{m+1}>c_m$, then $\mbox{EK}(c_m) < m+1$ and so $\mbox{EK}(c_m)=m$. To prove the converse, first note that for all $m$, we have $c_{m+1}\ge c_m$, by the definition of $c_m$. Arguing by contradition, if $\mbox{EK}(c_m)=m$ and $c_m=c_{m+1}$, then $T(2, c_m)=T(2,c_{m+1})$ implies that $\mbox{EK}(c_m)\ge m+1$, a contradiction. 
\end{proof}

Theorem~\ref{main theorem}, its Corollary, and examples derived by  a construction explained in Section~\ref{seams} allow us to extend Suzuki's table of values of $\text{\rm EK}(n)$ for $n\le 45$. We postpone this discussion until Section~\ref{seams}.
Interestingly, $\text{\rm EK}$ is not an increasing, or even nondecreasing, function as the values given in (\ref{Suzuki data}) show.
However, we will prove the following theorem in Section~\ref{seams}. 
\begin{theorem}\label{lower bound theorem} For all $N\ge 3n\ge 9$, we have $\text{\rm EK}(N)\ge \text{\rm EK}(n)$. 
\end{theorem}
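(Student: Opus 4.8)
The plan is to fix $n$ with $3n\ge 9$, set $m=\mathrm{EK}(n)$, and exhibit for every integer $N\ge 3n$ a $2$-bridge knot of crossing number $N$ that lies strictly above at least $m$ distinct nontrivial knots; this yields $\mathrm{EK}(N)\ge m=\mathrm{EK}(n)$. If $m=0$ there is nothing to prove, so assume $m\ge 1$, which by Theorem~\ref{main theorem} forces $n\ge c_1=9$. By the (computational) definition of $\mathrm{EK}$ there is a single $2$-bridge knot $K$ with $\mathrm{cr}(K)=n$ and distinct nontrivial knots $K_1,\dots,K_m$ with $K>K_i$ for each $i$; by \cite{BBRW} each $K_i$ is again $2$-bridge, and by Suzuki's theorem \cite{S} we have $n=\mathrm{cr}(K)\ge \mathrm{cr}(K_i)$. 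It therefore suffices to construct, for each $N\ge 3n$, a $2$-bridge knot $J=J_N$ with $\mathrm{cr}(J_N)=N$ and an epimorphism $\pi_1(S^3-J_N)\twoheadrightarrow\pi_1(S^3-K)$. Granting this, transitivity gives $J_N\ge K_i$ for all $i$; since $\mathrm{cr}(J_N)=N\ge 3n>n=\mathrm{cr}(K)\ge \mathrm{cr}(K_i)$, the knot $J_N$ differs from $K$ and from every $K_i$, and by antisymmetry of the order (established in the Introduction) $J_N$ is strictly greater than the $m$ distinct nontrivial knots $K_1,\dots,K_m$. Hence $\mathrm{EK}(N)\ge m$.

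To build $J_N$ I would use the parsing machinery of Section~\ref{seams}. Write the defining fraction of $K$ as a continued fraction $[a_1,\dots,a_k]$ with all $a_i\ge 1$, chosen so that $\sum a_i=n=\mathrm{cr}(K)$ (the entry sum of a reduced, all-positive continued fraction realizes the crossing number of the associated alternating $2$-bridge knot). The construction underlying the Ohtsuki--Riley--Sakuma epimorphisms, reinterpreted through parsings and their seams, produces an epimorphism onto $\pi_1(S^3-K)$ from the $2$-bridge knot whose continued fraction is a concatenation of an odd number of copies of $[a_1,\dots,a_k]$ — alternately reversed and sign-adjusted — joined by ``seam'' entries. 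Taking three copies and choosing the two seams so that the concatenated word collapses to a reduced, all-positive word with entry sum exactly $N$ gives a $2$-bridge knot $J_N$ with $\mathrm{cr}(J_N)=N$ that still admits the epimorphism onto $\pi_1(S^3-K)$: for example one seam can be taken so as to merge adjacent blocks (recovering crossing number $3n$) while the other carries a free parameter that raises the count to any $N>3n$. Assembling the pieces then proves Theorem~\ref{lower bound theorem}, since $J_N$ witnesses $\mathrm{EK}(N)\ge \mathrm{EK}(n)$ for every $N\ge 3n\ge 9$.

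The hard part is this last step of the construction: arranging, for \emph{every} $N\ge 3n$, that the concatenated, seamed continued fraction is simultaneously (i) reduced and all-positive, so that its entry sum is genuinely $\mathrm{cr}(J_N)=N$; (ii) the fraction of a knot rather than a two-component link; and (iii) of exactly the shape demanded by the parsing/epimorphism theorem, so that $J_N\ge K$ really holds. The delicate cases are the boundary values $N=3n$ and $N=3n+1$, where the seams have almost no room, together with controlling the parity of the defining numerator; any crossing numbers not reached by the simplest three-copy parsing would have to be picked up by longer seams, by using more copies, or by a short auxiliary argument. Everything else — transitivity, antisymmetry, and the bookkeeping in the first paragraph — is formal.
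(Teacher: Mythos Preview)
Your overall strategy---take a witness $K$ for $\mathrm{EK}(n)$, build a $3$-fold ORS cover $J_N$ of $K$ with prescribed crossing number $N$, and use transitivity---is exactly the paper's approach. The difference is that you stop precisely at the point the paper finishes: you flag the construction of $J_N$ as ``the hard part'' and leave conditions (i)--(iii) unverified, whereas the paper resolves them in two lines by working not with all-positive continued fractions but with the expanded even vectors in $\mathcal{S}_{\mathrm{even}}$, where Suzuki's crossing-number formula (Theorem~\ref{suzuki theorem}) makes the bookkeeping trivial.

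Concretely, let ${\bf c}\in\mathcal{S}_{\mathrm{even}}$ represent $K$. If $N-3n$ is even, take $m$ with $|m|=N-3n$ and (when nonzero) the same sign as the last entry of ${\bf c}$, and set ${\bf d}=({\bf c},{\bf m},{\bf c}^{-1},0,{\bf c})$. If $N-3n$ is odd, set $m=N-3n+1$ and ${\bf d}=({\bf c},{\bf m},-{\bf c}^{-1},0,-{\bf c})$. In both cases ${\bf d}\in\mathcal{S}_{\mathrm{even}}$, Theorem~\ref{suzuki theorem} gives $\mathrm{cr}({\bf d})=N$, and ${\bf d}$ visibly $3$-fold parses with respect to ${\bf c}$ (hence with respect to every ${\bf b}_i$ that ${\bf c}$ parses over). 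The parity split is exactly what handles your ``delicate'' cases $N=3n$ and $N=3n+1$: the sign flip on the last two tiles changes the number of sign changes by one and so shifts the crossing number by one. Your worries about the result being a knot rather than a link and about staying inside the parsing framework disappear automatically in $\mathcal{S}_{\mathrm{even}}$, since vectors there always have even length (forcing odd $q$) and the parsing definition is native to that setting. In short, the gap in your proposal is not conceptual but technical: switching from all-positive expansions to expanded even vectors turns the ``hard part'' into a two-case computation.
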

From this we obtain the following corollaries. In the first, the upper bound was previously shown in \cite{S}.
\begin{corollary}\label{almost increasing theorem} 
For all $n\ge 3$, we have $\text{\rm EK}(\lfloor \frac{n}{3} \rfloor)\le \text{\rm EK}(n) \le \lfloor \frac{n-3}{6}\rfloor$, where $\lfloor x\rfloor$ denotes the largest integer less than or equal to $x$.
\end{corollary}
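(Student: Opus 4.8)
The corollary has two halves; the upper bound $\text{\rm EK}(n)\le\lfloor(n-3)/6\rfloor$ is attributed to Suzuki and may be quoted, so I would concentrate on the lower bound $\text{\rm EK}(\lfloor n/3\rfloor)\le\text{\rm EK}(n)$. The plan is to invoke Theorem~\ref{lower bound theorem} with the right substitution. Write $k=\lfloor n/3\rfloor$. The difficulty is that Theorem~\ref{lower bound theorem} is phrased with the hypothesis $N\ge 3n\ge 9$, so to conclude $\text{\rm EK}(n)\ge\text{\rm EK}(k)$ I need to know that $n\ge 3k\ge 9$. The inequality $n\ge 3\lfloor n/3\rfloor$ is immediate, and $3k\ge 9$ amounts to $k\ge 3$, i.e.\ $\lfloor n/3\rfloor\ge 3$, which holds precisely when $n\ge 9$.

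So the argument splits into two ranges. For $n\ge 9$, set $N=n$ and apply Theorem~\ref{lower bound theorem} to the pair $(N,k)=(n,\lfloor n/3\rfloor)$: the hypotheses $N\ge 3k$ and $3k\ge 9$ are exactly what was checked above, giving $\text{\rm EK}(n)=\text{\rm EK}(N)\ge\text{\rm EK}(k)=\text{\rm EK}(\lfloor n/3\rfloor)$. For the remaining small cases $3\le n\le 8$, one has $\lfloor n/3\rfloor\le 2$, and since there are no nontrivial knots with crossing number less than $3$ (and the trefoil, with crossing number $3$, is not strictly smaller than anything of crossing number $\le 2$), $\text{\rm EK}(0)=\text{\rm EK}(1)=\text{\rm EK}(2)=0$; hence $\text{\rm EK}(\lfloor n/3\rfloor)=0\le\text{\rm EK}(n)$ trivially. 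This disposes of the lower bound in all cases.

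For completeness I would also record the upper bound. By the main theorem of \cite{S}, if a 2-bridge knot $J$ is strictly greater than a single nontrivial 2-bridge knot $K$, then $\mbox{cr}(J)\ge 3\,\mbox{cr}(K)+\text{(something)}$; more precisely Suzuki's crossing estimate yields $\mbox{cr}(J)\ge 3\,\mbox{cr}(K)$ with enough slack to force $\mbox{cr}(J)\ge 9$ when $K$ is nontrivial, and iterating the estimate over a chain $J>K_1>\cdots$ of distinct nontrivial knots produces the bound $m\le\lfloor(\mbox{cr}(J)-3)/6\rfloor$ when $J$ has crossing number $n$; this is the statement $\text{\rm EK}(n)\le\lfloor(n-3)/6\rfloor$ proved in \cite{S}, which we simply cite.

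The only real obstacle is the bookkeeping at the boundary of the hypothesis in Theorem~\ref{lower bound theorem}: one must be careful that $3\lfloor n/3\rfloor\ge 9$ fails exactly for $n\le 8$, and handle those cases separately using the vanishing of $\text{\rm EK}$ below crossing number $9$ (equivalently, for arguments $\le 2$). Everything else is a direct substitution.
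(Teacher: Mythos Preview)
Your proposal is correct and matches the paper's approach: the paper gives no explicit proof of this corollary, merely remarking that the lower bound follows from Theorem~\ref{lower bound theorem} and that the upper bound is due to Suzuki~\cite{S}, and you have filled in exactly those details, including the small-case bookkeeping for $3\le n\le 8$.
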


\begin{corollary}
The function $\text{\rm EK}$ can take on any given value at most finitely many times.
\end{corollary}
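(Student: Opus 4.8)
The plan is to deduce this from Theorem~\ref{lower bound theorem} (equivalently, from Corollary~\ref{almost increasing theorem}) together with the elementary observation, already used in the proof of Corollary~\ref{EK inverse}, that a single torus knot can be strictly greater than many distinct nontrivial knots at once. The point is that although $\text{\rm EK}$ is not monotonic, it cannot remain bounded by a fixed $v$ once $n$ is large, because any crossing number that witnesses $\text{\rm EK}\ge v+1$ re-witnesses it, up to a factor of $3$, for all larger $n$.

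First I would note that $c_m$ is finite for every $m$: the odd integer $3^{m+1}$ has divisors $1,3,9,\dots,3^m$, hence at least $m$ nontrivial proper divisors, so $c_m\le 3^{m+1}<\infty$; any explicit bound suffices since only finiteness is needed. Fix a value $v$. Then the torus knot $T(2,c_{v+1})$ is a $2$-bridge knot with crossing number $c_{v+1}$ which, for each of the (at least $v+1$) nontrivial proper divisors $d$ of $c_{v+1}$, is strictly greater than the nontrivial $2$-bridge knot $T(2,d)$ (via the period $c_{v+1}/d$ with quotient $T(2,d)$, exactly as in the proof of Corollary~\ref{EK inverse}). These $T(2,d)$ are distinct nontrivial knots, so $\text{\rm EK}(c_{v+1})\ge v+1$.

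Next I would apply Theorem~\ref{lower bound theorem} with $n=c_{v+1}$. Since $c_{v+1}\ge c_1=9\ge 3$, the hypothesis $3n\ge 9$ holds, so $\text{\rm EK}(N)\ge \text{\rm EK}(c_{v+1})\ge v+1>v$ for every $N\ge 3c_{v+1}$. Hence $\text{\rm EK}(N)=v$ can only occur for $N<3c_{v+1}$, and therefore $\text{\rm EK}$ takes the value $v$ at most finitely many times, which is the assertion.

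I do not expect a genuine obstacle here, since all the ingredients are already established; the only things to check are the finiteness of $c_m$ (immediate) and that the chosen $n=c_{v+1}$ satisfies the hypothesis of Theorem~\ref{lower bound theorem} (also immediate). One could instead argue directly from Corollary~\ref{almost increasing theorem} by iterating $\text{\rm EK}(n)\ge \text{\rm EK}(\lfloor n/3\rfloor)$, but the single invocation of Theorem~\ref{lower bound theorem} is cleaner.
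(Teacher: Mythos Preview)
Your argument is correct and follows essentially the same route as the paper: use the torus knot $T(2,c_{v+1})$ to see that $\text{\rm EK}(c_{v+1})\ge v+1$, then invoke Theorem~\ref{lower bound theorem} to conclude that $\text{\rm EK}(N)>v$ for all $N\ge 3c_{v+1}$. Your added remarks on the finiteness of $c_m$ and the verification of the hypothesis $3n\ge 9$ are fine but not needed beyond what the paper already implicitly uses.
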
\label{EK cannot repeat infinitely}
\begin{proof} Let $k$ be any nonnegative integer. The torus knot $T(2, c_{k+1})$ is strictly greater than at least $k+1$ nontrivial knots and hence $\text{\rm EK}(c_{k+1})\ge k+1$. Now  $\text{\rm EK}(m)\ge k+1$ for all $m\ge 3 c_{k+1}$. Hence, the value of $k$ can only be taken on at most finitely many times.
\end{proof}   

\noindent Notice that Corollary~\ref{EK cannot repeat infinitely} implies $\displaystyle \lim_{n \to \infty}\text{\rm EK}(n)=\infty$. 

If  $J\ge K$  and $J$ is a 2-bridge knot then, as has already been  mentioned, $K$ must also be a 2-bridge knot \cite{BBRW}. Moreover, it is shown in this case (see \cite{A} and \cite{ALS})  that the epimorphism of fundamental groups is actually induced by a {\sl branched fold  map} on the complements of the knots as described by Ohtsuki, Riley, and Sakuma in \cite{ORS}. It is not necessary in this paper to describe their construction. Instead, we rely entirely on the results in ~\cite{GHS}, where a branched fold map between two 2-bridge knot complements is described entirely in terms of the continued fraction expansions associated to the two knots. This interpretation allows one to easily determine all 2-bridge knots that are smaller than a given 2-bridge knot. In the next section we review and build on the notation and main results of ~\cite{GHS}. In Section~\ref{main theorem section}, we  prove a few necessary facts about the function $c_m$ and then prove Theorem~\ref{main theorem}. In Section~\ref{seams} we prove Theorem~\ref{lower bound theorem} and determine $\text{\rm EK}(n)$ for $31\le n\le 45$.

This paper grew out of an undergraduate research project completed by Joshua Ocana Mercado that was directed by the third author and supported by the McNair Scholars Program \cite{OM}.

\section{Two-bridge Knots and Continued Fractions}

Recall that a 2-bridge knot is one having a 4-plat diagram as shown in Figure~\ref{4 plat}. Here a box labeled $a_i$  denotes $a_i$ right-handed half-twists if $a_i>0$, and $-a_i$ left-handed half-twists otherwise.  Note that by using $-a_i$ half-twists when $i$ is even produces an alternating diagram when all the $a_i$'s have the same sign.  Such a diagram is completely determined by the sequence $a_1, a_2, \dots, a_k$. 
\begin{figure}[htbp]
    \begin{center}
    \leavevmode
    \scalebox{.50}{\includegraphics{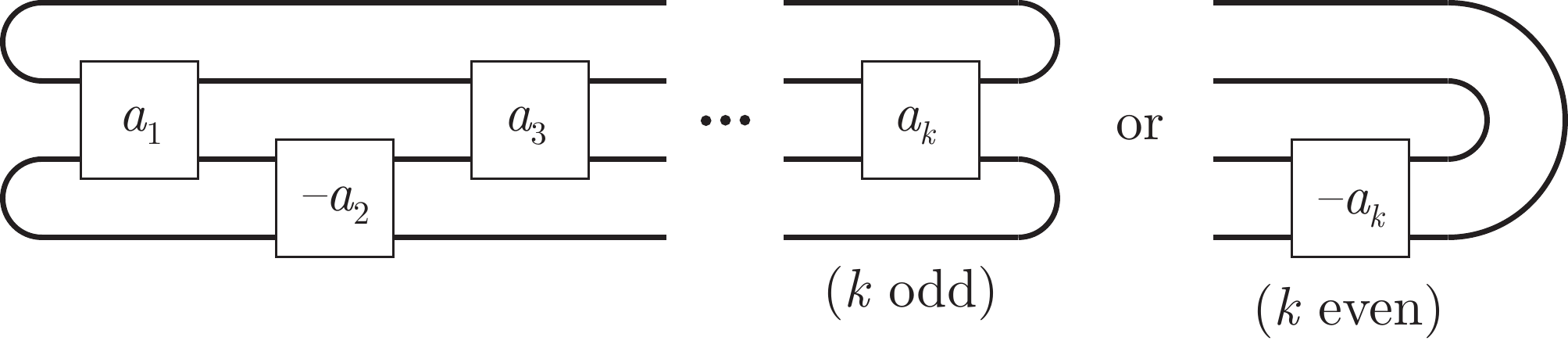}}
    \end{center}
\caption{The 2-bridge knot defined by the sequence $a_1, a_2, \dots, a_k$.}
\label{4 plat}
\end{figure}

If we form the continued fraction
$$p/q=[a_1, a_2, \dots, a_k]=\frac{1}{a_1+\displaystyle \frac{1}
{
\begin{array}{ccc}
a_2+&&\\
&\ddots&\\
&&+\displaystyle \frac{1}{a_k}\\
 \end{array}
 }
 }$$
then we may denote the knot as $K_{p/q}$. It is well known that  $K_{p/q}$ and $K_{p'/q'}$ are ambient isotopic as unoriented knots  if and only if $q'=q$ and $p' \equiv p^{\pm 1}\  (\mbox{mod } q)$ (see \cite{BZ:2003} for details).  In this paper, we will not distinguish between a knot $K_{p/q}$ and its mirror image $K_{-p/q}$.  Therefore, two 2-bridge knots  $K_{p/q}$ and $K_{p'/q'}$ are {\it equivalent} if and only if $q'=q$ and either $p' \equiv p^{\pm 1}\  (\mbox{mod } q)$ or $p' \equiv -p^{\pm 1}\  (\mbox{mod } q)$. It turns out that because the 4-plat diagram is of a knot, rather than a link, we must have $q$ odd. Furthermore, given any relatively prime pair of integers $p$ and $q$, with $q$ odd, and $-q<p<q$,  there is a 2-bridge knot with associated fraction $p/q$.

Any reduced fraction $p/q$ can be expressed as a continued fraction $r+[a_1, a_2, \dots, a_k]$ in infinitely many ways. However there are various schemes for producing a canonical expansion. The following Lemma is proven in \cite{GHS}.

\begin{lemma}
\label{special fraction} Let $\frac{p}{q}$ be a reduced fraction with $q$ odd. Then we may express $p/q$ uniquely as 
$$\frac{p}{q}=r+[a_1, a_2, \dots, a_k],$$
where each $a_i$ is a nonzero, even integer. 
 Moreover, $k$ must be even and $p$ and $r$ have the same parity. 
\end{lemma}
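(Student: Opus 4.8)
The statement to prove is Lemma~\ref{special fraction}: every reduced fraction $p/q$ with $q$ odd has a unique expansion $p/q = r + [a_1,\dots,a_k]$ with all $a_i$ nonzero even integers, $k$ even, and $r \equiv p \pmod 2$. I would prove this by induction on $|q|$ (or equivalently on the number of steps of a continued-fraction-type algorithm), exploiting the fact that the "even" subtractive algorithm is deterministic when $q$ is odd. Concretely, given $p/q$ with $q$ odd, first I would choose the unique integer $r$ with $p - rq$ even and $|p-rq| < |q|$ — this is possible because, modulo $2q$, exactly one of $p$ and $p-q$ (equivalently, exactly one residue $p - rq$ with $-q < p-rq < q$) is even, using that $q$ is odd. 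Write $p = rq + p_1$ with $p_1$ even. If $p_1 = 0$ then $q = \pm 1$ (since $\gcd(p,q)=1$), the expansion is just $r$ with $k = 0$ (even), and $r = p$ has the right parity; this is the base case. Otherwise $p/q = r + p_1/q = r + 1/(q/p_1)$, and I would set $a_1 = $ (the even integer part producing $q/p_1 = a_1 + p_1'/p_1$ with $|p_1'| < |p_1|$), so that $a_1$ is even and nonzero; here $p_1$ is even so I would want to apply the inductive hypothesis to the fraction $q/p_1$ — but $p_1$ is even, not odd, so I need to be careful.

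**Fixing the parity bookkeeping.** The cleaner route is to iterate in pairs. After peeling off $r$ and $a_1$ we land on a fraction whose denominator is again odd (since $q$ is odd and we divided into $q$): more precisely $q/p_1$ has odd numerator $q$, and the next reciprocal step $q = a_1 p_1 + p_2$ forces $p_2$ to be whatever it is, but then $p_1/p_2$ — hmm. So instead I would prove the two-step statement directly: show that from $p/q$ with $q$ odd and $p_1 \ne 0$ one can extract a unique pair $a_1, a_2$ (both nonzero even) and a remaining reduced fraction $p'/q'$ with $q'$ odd, $|q'| < |q|$, and then apply induction. The key arithmetic fact driving the pair-extraction and its uniqueness is the same one used for $r$: at each "odd-denominator" stage there is a unique even-remainder choice. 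I would verify that $k$ comes out even because the algorithm terminates exactly when the odd-denominator fraction becomes $\pm 1/(\text{something})$ after an even number of $a_i$'s have been produced, and that $r \equiv p \pmod 2$ by tracking parities through $p = rq + p_1$ with $p_1$ even and $q$ odd, so $p \equiv r \pmod 2$.

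**Uniqueness.** For uniqueness, I would argue that at every stage the choice is forced: given that all entries must be even and nonzero, the constraint $|p_{i+1}| < |p_i|$ (which is forced because a continued fraction $[a_1,\dots,a_k]$ with all $|a_i|\ge 2$ has absolute value strictly less than $1$, and more precisely the tails are pinned down) leaves exactly one admissible even value for each $a_i$ and for $r$. The cleanest way to package this is: if $p/q = r + [a_1,\dots,a_k] = r' + [a_1',\dots,a_{k'}']$ with all data even, then comparing $\lfloor\text{mod } 2q\rfloor$-type data forces $r = r'$, hence $[a_1,\dots,a_k] = [a_1',\dots,a_{k'}']$ as rationals with all-even entries, and an induction on the denominator (noting the reciprocal has odd numerator) forces $a_1 = a_1'$, and so on.

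**Main obstacle.** The genuine subtlety — and the step I would spend the most care on — is the interleaving of parities: the numerators $p_i$ produced by the algorithm are even while the denominators stay odd, so a naive single-step induction "on $q$ odd" does not close up, and one must either (a) induct two steps at a time, or (b) set up a slightly more general inductive statement that covers fractions "odd over even" as well. Getting the termination count to be provably even, and simultaneously nailing down that the forced sign choices never produce $a_i = 0$, is where the real work lies; everything else is the standard continued-fraction division algorithm with a parity twist. I expect the paper to handle this by citing or adapting the division algorithm with the explicit observation that "$q$ odd $\Rightarrow$ unique even numerator in the reciprocal," which is the linchpin.
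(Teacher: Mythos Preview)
The paper does not actually prove this lemma: immediately before the statement it says ``The following Lemma is proven in \cite{GHS}'' and then moves on without argument. So there is no in-paper proof to compare against; the authors simply import the result from Garrabrant--Hoste--Shanahan.

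Your sketch is a correct direct proof, and the obstacle you flag is the real one. A clean way to organize what you wrote is to first prove the structural facts independently of the algorithm: show by an easy induction on $k$ that if all $a_i$ are nonzero even then $[a_1,\dots,a_k]\in(-1,1)$, and that its reduced numerator/denominator are even/odd when $k$ is even and odd/even when $k$ is odd. The parity claim on $k$ and the congruence $r\equiv p\pmod 2$ then follow immediately from the hypothesis that $q$ is odd, \emph{before} you run any algorithm. With that in hand, uniqueness is forced step by step exactly as you say: $r-r'\in(-2,2)$ and $r\equiv r'\pmod 2$ give $r=r'$; then $a_1-a_1'\in(-2,2)$ and both even give $a_1=a_1'$; and so on. For existence, the ``even-remainder Euclidean algorithm'' you describe works, and the worry about parity interleaving dissolves once you note that at odd-indexed steps the divisor $p_i$ is even (hence $|p_i|\ge 2$, so the quotient is a non-integer and the length-$2$ interval contains exactly one even integer), while at even-indexed steps the divisor $p_i$ is odd, and if $p_i=\pm1$ you simply terminate with $a_i=\pm p_{i-1}$ even. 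The two-step induction you propose is one way to package this; the other is to carry the alternating parity of the $p_i$'s as part of the inductive hypothesis.

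The only place your write-up is a little loose is the assertion that the forced even $a_i$ is automatically nonzero: this uses $|p_{i-1}|>|p_i|$ from the previous step, so that $|p_{i-1}/p_i|>1$ and the open interval $(p_{i-1}/p_i-1,\,p_{i-1}/p_i+1)$ misses $0$. You gesture at this but it is worth stating explicitly.
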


It is common to assume that  each {\it partial quotient} $a_i$ is not zero, however, we can easily make sense of continued fractions that use zeroes. A zero can be introduced or deleted from a continued fraction as follows:
$$[\dots, a_{k-2}, a_{k-1}, 0, a_{k+1}, a_{k+2}, \dots]=[\dots, a_{k-2}, a_{k-1}+a_{k+1}, a_{k+2},\dots].$$
Using this property, every continued fraction with all even partial quotients can be expanded so that each partial quotient is either $-2, 0$, or $2$. For example, a partial quotient of $6$ would be expanded to $2,0,2,0,2$ and $-4$ to $-2,0,-2$. This leads us to the following definition.

\begin{definition} {\rm Let ${\cal S}_\text{\rm even}$ be the set of all integer vectors $(a_1, a_2, \dots, a_k)$ such that
\begin{enumerate}
\item $k$ is even, 
\item each $a_i\in \{-2, 0, 2\}$, 
\item $a_1\ne0$ and $a_k\ne 0$,
\item if $a_i=0$ then $a_{i-1}=a_{i+1}\ne 0$.
\end{enumerate}
We call ${\cal S}_\text{\rm even}$ the set of {\it expanded even vectors of even length}.}
\end{definition}

We may  define an equivalence relation on ${\cal S}_\text{\rm even}$ by declaring that ${\bf a}, {\bf b} \in {\cal S}_\text{\rm even}$ are equivalent if ${\bf a}=\pm {\bf b}$ or ${\bf a}=\pm {\bf b}^{-1}$ where $-{\bf b}$ is obtained by negating every entry in $\bf b$, and ${\bf b}^{-1}$ is ${\bf b}$ read backwards.  We denote the equivalence class of ${\bf a}$ as $\hat{\bf a}$ and the set of all equivalence classes as $\hat{ \cal S}_\text{\rm even}$. 
The following proposition appears in \cite{GHS}.

\begin{proposition}
\label{2-bridge classification}
If $\Phi(\hat{\bf a})$ is defined to be the knot $K_{p/q}$ where $p/q=[{\bf a}]$, then 
$\Phi$   is a bijection between $\hat{\cal S}_\text{\rm even}$ and the set of equivalence classes of 2-bridge knots.
\end{proposition}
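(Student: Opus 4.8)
The plan is to factor $\Phi$ as $\hat{\bf a}\mapsto[{\bf a}]=p/q\mapsto K_{p/q}$ and analyze the two arrows separately: first I would show that ${\bf a}\mapsto[{\bf a}]$ is a bijection from ${\cal S}_\text{\rm even}$ onto a concrete set $F$ of fractions, and then I would match the four symmetries generating an equivalence class $\hat{\bf a}$ with the four residues $\pm p^{\pm1}\pmod q$ that appear in the stated classification of $2$-bridge knots.

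For the first arrow the key inputs are some parity and size facts about continued fractions with even partial quotients, together with Lemma~\ref{special fraction}. Reading off an entry of the matrix product of the symmetric matrices $\bigl(\begin{smallmatrix}a_i&1\\1&0\end{smallmatrix}\bigr)$ and reducing mod $2$ (all $a_i$ even and $k$ even, so the product is $\equiv I$) shows that $[{\bf a}]=p/q$ is reduced with $q$ odd and $p$ even; an induction using $|a_1|\ge 2$ and the fact that every tail of the fraction lies in $(-1,1)$ shows $-q<p<q$, and in particular $p\ne 0$ (equivalently $q\ge 3$). I would then contract the zeros of ${\bf a}$ via $[\dots,a,0,b,\dots]=[\dots,a+b,\dots]$: condition~(4) forces each maximal alternating block $s,0,s,0,\dots,s$ to have all its nonzero entries equal to a common $s=\pm 2$, so it contracts to a single nonzero even integer, and a length count (using that $k$ is even) shows the contracted vector $(b_1,\dots,b_L)$ has $L$ even. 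Since $[{\bf a}]=0+[b_1,\dots,b_L]$ with the $b_i$ nonzero even and $L$ even, this is precisely the shape in Lemma~\ref{special fraction} with $r=0$, so uniqueness there identifies $(b_1,\dots,b_L)$ as \emph{the} canonical even expansion of $p/q$; conversely each even integer $b_i$ has a unique such alternating block inside ${\cal S}_\text{\rm even}$ (the sign is forced and consecutive zeros are forbidden). Reading these steps backwards recovers ${\bf a}$ from $p/q$, so ${\bf a}\mapsto[{\bf a}]$ is a bijection between ${\cal S}_\text{\rm even}$ and the set $F$ of reduced fractions $p/q$ with $q$ odd, $q\ge 3$, $-q<p<q$, and $p$ even; Lemma~\ref{special fraction} together with the expansion construction also gives surjectivity onto $F$.

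For the second arrow I would describe the effect on $F$ of the two involutions generating $\hat{\bf a}$, using that both preserve membership in ${\cal S}_\text{\rm even}$ (conditions (1)--(4) are symmetric under negation and reversal). Negation of ${\bf a}$ negates $[{\bf a}]$, sending $p\mapsto -p$. Reversal of ${\bf a}$ keeps the denominator $q$ and, since the matrix product is replaced by its transpose and $\det=(-1)^k=1$, sends $p$ to $-p^{-1}\bmod q$; thus $\pm{\bf a},\pm{\bf a}^{-1}$ realize exactly the residues $\{p,-p,p^{-1},-p^{-1}\}\bmod q$. Every such residue (a unit mod the odd number $q$) has a unique representative in $F$, so combining with the classification ``$K_{p/q}=K_{p'/q'}$ iff $q=q'$ and $p'\equiv\pm p^{\pm1}\pmod q$'' we obtain: $K_{[{\bf a}]}=K_{[{\bf b}]}$ iff $[{\bf b}]$ is the $F$-representative of one of $\pm p^{\pm1}\bmod q$, iff ${\bf b}\in\{\pm{\bf a},\pm{\bf a}^{-1}\}$, iff $\hat{\bf a}=\hat{\bf b}$. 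This proves simultaneously that $\Phi$ is well defined on $\hat{\cal S}_\text{\rm even}$ and injective. Surjectivity onto equivalence classes of $2$-bridge knots follows because any such knot is $K_{p/q}$ with $q$ odd (and $q\ge 3$ if nontrivial), and after replacing $p$ by its unique even representative mod $q$ in $(-q,q)$ we have $p/q\in F=\{\,[{\bf a}]:{\bf a}\in{\cal S}_\text{\rm even}\,\}$.

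The main obstacle is the bookkeeping in the second half of the first arrow: verifying that contracting a vector of ${\cal S}_\text{\rm even}$ lands \emph{exactly} on the canonical even expansion of Lemma~\ref{special fraction} requires first nailing down that $p$ is even and $-q<p<q$, so that the lemma's uniqueness applies with $r=0$ rather than with some even $r\ne 0$, and then checking the block-expansion of each $b_i$ is unique. The one non-elementary ingredient is the ``palindrome'' identity that reversing the continued fraction inverts the numerator mod $q$; without it the four vector-symmetries would not match the four residues $\pm p^{\pm1}$ and injectivity could fail. That identity is classical and, in the matrix formulation above, is simply the observation that transposing a product of symmetric matrices reverses it.
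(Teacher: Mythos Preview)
The paper does not actually prove this proposition: it is quoted from \cite{GHS} and no argument is given in the text. So there is no ``paper's own proof'' to compare against; your proposal stands on its own.

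Your argument is correct and is the natural one. Factoring through the fraction $p/q$, using Lemma~\ref{special fraction} (after contracting zeros) to see that ${\bf a}\mapsto[{\bf a}]$ is a bijection from ${\cal S}_{\text{even}}$ onto the set of reduced $p/q$ with $q$ odd, $q\ge 3$, $p$ even, and $|p|<q$, and then matching the four vector symmetries with the four residues $\pm p^{\pm1}\bmod q$ via the transpose/palindrome identity, is exactly how one proves this. The parity computation (product of $\bigl(\begin{smallmatrix}0&1\\1&0\end{smallmatrix}\bigr)$'s is $I$ when $k$ is even), the bound $|[{\bf a}]|<1$ obtained after contraction from $|b_i|\ge 2$, and the observation that each unit mod the odd number $q$ has a unique \emph{even} representative in $(-q,q)$ are the right checkpoints, and you hit all of them. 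One small wording point: when you say reversal sends $p$ to $-p^{-1}$, it is worth recording that in the paper's convention $[a_1,\dots,a_k]=C/A$ where $\prod_i\bigl(\begin{smallmatrix}a_i&1\\1&0\end{smallmatrix}\bigr)=\bigl(\begin{smallmatrix}A&B\\C&D\end{smallmatrix}\bigr)$, so the reversed fraction is $B/A$ and $AD-BC=1$ gives $B\equiv -C^{-1}\pmod A$; this makes the sign unambiguous.
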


We will make use of the following two results from \cite{S}. 
If ${\bf a}\in {\cal S}_\text{\rm even}$, let  $\ell({\bf a})$ denote the {\it length} of $\bf a$ and  $\text{\rm cr}({\bf a})$ the  crossing number of $\Phi(\hat{\bf a})$. 
\begin{theorem}[Suzuki]\label{suzuki theorem} Suppose $\bf a \in {\cal S}_\text{\rm even}$. Then
\begin{enumerate}
\item the crossing number of $\Phi(\hat{\bf a})$ is equal to the sum of the absolute values of the components of $\bf a$ minus the number of sign changes in $\bf a$, and
\item $\ell({\bf a})+1\le \text{\rm cr}({\bf a}) \le 2 \ell({\bf a})$.
\end{enumerate}
\end{theorem}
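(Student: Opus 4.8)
The plan is to establish part (1) — the crossing-number formula — and then deduce part (2) from it by a short count. For (1), the first step is to reduce to vectors with no zero entries. Collapsing a $0$ via $[\dots, a_{i-1}, 0, a_{i+1}, \dots] = [\dots, a_{i-1}+a_{i+1}, \dots]$ does not change the knot, and since in ${\cal S}_\text{\rm even}$ every $0$ lies between two equal nonzero entries, collapsing changes neither the sum of the absolute values of the entries nor the number of sign changes. So we may assume ${\bf b} = (b_1, \dots, b_m)$ has all entries nonzero (and even), and it suffices to show $\text{\rm cr}(K_{p/q}) = \sum_i |b_i| - s$, where $p/q = [{\bf b}]$ and $s$ is the number of sign changes of ${\bf b}$.

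The crux is the continued-fraction identity $[\dots, x, y, z_1, \dots, z_r] = [\dots, x-1, 1, -y-1, -z_1, \dots, -z_r]$, which is a one-line verification. Replacing $K_{p/q}$ by its mirror image if necessary (which changes none of $\text{\rm cr}$, $\sum_i|b_i|$, or $s$), assume $b_1 > 0$; let $i$ be the last index of the initial run of positive entries, so $b_{i+1} < 0$, and apply the identity at position $i$. Since $b_i \ge 2$ and $b_{i+1} \le -2$, the new entries $b_i - 1$, $1$, and $-b_{i+1}-1$ are all positive, the entries before position $i$ are untouched, and the tail from $b_{i+2}$ on is negated; a direct check shows that this kills exactly the sign change between $b_i$ and $b_{i+1}$, creates no new sign change, and decreases the sum of absolute values by exactly $1$. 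Iterating $s$ times produces a continued fraction, still representing $K_{p/q}$, all of whose entries are positive and whose coefficient sum is $\sum_i |b_i| - s$. For an all-positive continued fraction expansion of a $2$-bridge knot the coefficient sum equals the crossing number — this is the classical computation of the crossing number of a rational knot from its standard alternating $4$-plat diagram, together with the minimality of reduced alternating diagrams — so this proves (1).

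Part (2) then follows by counting. Put $k = \ell({\bf a})$, let $z$ be the number of zero entries, and let $s$ be the number of sign changes, so by (1) $\text{\rm cr}({\bf a}) = \sum_i |a_i| - s = 2(k - z) - s$. Since $s \ge 0$ and there are $k$ entries, each of absolute value at most $2$, we get $\text{\rm cr}({\bf a}) \le 2(k-z) \le 2k = 2\ell({\bf a})$. For the lower bound, use the defining axioms of ${\cal S}_\text{\rm even}$: no two zeros are adjacent, and each zero sits between two equal nonzero entries; hence, listing the $k - z$ nonzero entries in order, each zero occupies a distinct gap between a pair of consecutive, equal nonzero entries. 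There are $(k - z - 1) - s$ such gaps, so $z \le (k - z - 1) - s$, i.e. $s \le k - 2z - 1$, and therefore $\text{\rm cr}({\bf a}) = 2(k - z) - s \ge 2(k-z) - (k - 2z - 1) = k + 1 = \ell({\bf a}) + 1$.

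The main obstacle is the bookkeeping in the inductive step of (1): one must check that each application of the identity removes precisely one sign change and no more, never produces a degenerate continued fraction, and — after tidying the leading and trailing entries — terminates at a bona fide all-positive expansion of the original knot with coefficient sum reduced by exactly $s$. The reduction that removes zeros and the deduction of (2) are routine, and the only nontrivial external input is the classical fact that a reduced alternating $4$-plat diagram realizes the crossing number.
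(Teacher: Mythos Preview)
The paper does not actually prove this theorem: part~(1) is simply attributed to Suzuki~\cite{S}, and the paper's only remark is that ``the second part of Theorem~\ref{suzuki theorem} follows immediately from the first part.'' Your proposal therefore supplies considerably more than the paper does, and there is no real ``paper's proof'' to compare against.

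Your deduction of (2) from (1) is correct and cleanly written. The inequality $z\le (k-z-1)-s$, obtained by observing that each zero of ${\bf a}$ occupies a distinct equal-sign gap between consecutive nonzero entries, is exactly the right count and makes explicit what the paper leaves implicit.

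For (1), your strategy---collapse the zeros, then repeatedly apply $[\dots,x,y,z_1,\dots]=[\dots,x-1,1,-y-1,-z_1,\dots]$ at the first sign change until an all-positive expansion is reached---is the standard one and is correct in outline. There is, however, a real wrinkle in the iteration that you flag but do not resolve. Your justification ``since $b_i\ge 2$ and $b_{i+1}\le -2$'' is valid at the \emph{first} step because all entries are even, but after one step the vector contains the odd entries $b_i-1$, $1$, $-b_{i+1}-1$, and at the next sign change the last positive entry may equal $1$ (this happens precisely when $b_{i+1}=-2$ and $b_{i+2}>0$). Applying the identity there produces a $0$. The fix is routine: collapse that $0$ via $[\dots,a,0,1,\dots]=[\dots,a+1,\dots]$ and verify directly that the combined move still removes exactly one sign change and lowers $\sum_i|b_i|$ by exactly one. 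One should also note the invariant that at every stage the first \emph{negative} entry is always $\pm b_j$ for some original even $b_j$, hence of absolute value at least $2$, so $-y-1\ge 1$ never fails. With these two observations the induction goes through; without them the iteration as literally stated can produce zeros.
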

\noindent Note that the second part of Theorem~\ref{suzuki theorem} follows immediately from the first part. 

The partial order on 2-bridge knots can be described entirely in terms of vectors in ${\cal S}_\text{\rm even}$. To do so, we  introduce some notation. First, if $\bf g$ and $\bf h$ are vectors, we denote their concatenation by $({\bf g}, {\bf h})$.  Next, if $c$ is an even integer, we define the vector $\bf c$ to be $(0)$ if $c=0$ and otherwise as $\pm (2,0,2,0,\dots, 2)$ where the sum of all the entries is $c$.  Ohtsuki, Riley and Sakuma show that  $J\ge K$,   if and only if there exist vectors ${\bf a}$  and $\bf b$, representing the knots $J$ and $K$, respectively, such that of $\bf a$ can be {\it parsed with respect to} $\bf b$, which means that $\bf a$ can be written as 
\begin{equation}\label{parsing}{\bf a}=({\bf b}, {\bf  c}_1, \epsilon_2 {\bf b}^{-1}, {\bf   c}_2, \epsilon_3 {\bf b}, {\bf   c}_3, \dots, \epsilon_n  {\bf b}),\end{equation}
where each $\epsilon_i$ is $\pm 1$ and each $c_j$ is an even integer. Moreover, we require that if $c_i=0$, then $\epsilon_i=\epsilon_{i+1}$. This statement does not require that $\bf a$ and $\bf b$ are in  ${\cal S}_\text{\rm even}$. The advantage of passing to expanded even vectors of even length is that parsings cannot be hidden by using the wrong vector. For example the knot $K_{38/85}$ is represented by the vector ${\bf a}=(2,4,4,2)$ which does not parse with respect to any vector. But, using ${\bf a}'=(2,2,0,2,2,0,2,2)\in {\cal S}_\text{\rm even}$ instead, reveals that  $K_{38/85}\ge K_{2/5}=\Phi((2,2))$.

In (\ref{parsing}), the vectors ${\bf   c}_i$ are called {\bf b}-{\it connectors} and  separate the {\it {\bf b}-tiles} $\epsilon_k {\bf b}^{(-1)^{k+1}}$. Note that $n$ must be odd and  we say that the parsing is an $n$-fold parsing.  
(See \cite{GHS} and \cite{ORS} for more details.)

In this paper, we  will be particularly interested in vectors of the form
\begin{equation}\label{two-connector alternating form}{\bf v}=({\bf a}, {\bf m}, {\bf a}^{-1}, {\bf n},{\bf a}, {\bf m}, {\bf a}^{-1}, {\bf n}, \dots , {\bf a}),
\end{equation} where ${\bf a} \in {\cal S}_\text{\rm even}$ and $m$ and $n$ are even integers. We call such a vector {\it two-connector alternating} and will denote it as ${\bf a}^{2p+1}_{m,n}$, where $\bf a$ appears $2p+1$ times.  If $\bf a$ is empty, then we prefer to write ${\bf a}^{2p+1}_{m,n}$ as $({\bf m},{\bf n})^p$ instead. Notice that when ${\bf a}$ is nonempty, $\bf v$ parses with respect to $\bf a$ in a special way---the only connectors are $\bf m$ and $\bf n$ which alternate in the parsing, and the $\bf a$-tiles are never negated. If $\bf v$ is a two-connector alternating vector, it may be possible to write $\bf v$ in the form given in (\ref{two-connector alternating form}) in more than one way. For example, if ${\bf a}=(2, 2)$,
${\bf b}=(2,2,0,2,2,4,2,2)$ and ${\bf c}=(2,2,0,2,2,4,2,2,0,2,2,4,2,2)$, then
$${\bf a}^{15}_{0,4}= {\bf b}^{5}_{0,4}={\bf c}^{3}_{0,4}.$$
Notice that ${\bf b}={\bf a}^3_{0,4}$ and that ${\bf c}={\bf a}^5_{0,4}$. Moreover, it is easy to see that
$$({\bf u}^{2p+1}_{m,n})^{2q+1}_{m,n}={\bf u}^{(2p+1)(2q+1)}_{m,n},$$ for all vectors $\bf u$ and even integers $m$ and $n$.
The following result is proven in \cite{GHS}.
\begin{theorem}[\cite{GHS}]\label{two-connector alternating} If $\bf v$ can be written in the form ${\bf v}={\bf a}^{2q+1}_{m,n}$, then $m$ and $n$ are unique. Moreover, there is a unique shortest vector $\bf g$ for which ${\bf v}={\bf g}^{2P+1}_{m,n}$  and  ${\bf a}={\bf g}^{2p+1}_{m,n}$ where $2P+1=(2p+1)(2q+1)$.
\end{theorem}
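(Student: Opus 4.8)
\textit{Proof plan.}
We may assume $\bf v$ admits at least one expression ${\bf v}={\bf a}^{2q+1}_{m,n}$ with $q\ge 1$, since otherwise we take ${\bf g}={\bf v}$ and there is nothing to prove. Every assertion will follow from the single claim $(\star)$: if ${\bf v}={\bf a}^{2q+1}_{m,n}={\bf b}^{2r+1}_{m',n'}$ with $q,r\ge 1$, then $m'=m$ and $n'=n$, and there exist a vector $\bf g$ and nonnegative integers $\alpha,\beta$ with ${\bf a}={\bf g}^{2\alpha+1}_{m,n}$ and ${\bf b}={\bf g}^{2\beta+1}_{m,n}$. Granting $(\star)$: its first half is the asserted uniqueness of $m$ and $n$. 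For the rest, let $\bf g$ be a shortest vector for which ${\bf v}={\bf g}^{2P+1}_{m,n}$. Uniqueness of this $\bf g$ is the case of $(\star)$ in which both generators already have minimal length, and applying $(\star)$ to ${\bf v}={\bf g}^{2P+1}_{m,n}$ and to an arbitrary expression ${\bf v}={\bf a}^{2q+1}_{m,n}$ gives ${\bf a}={\bf g}^{2p+1}_{m,n}$ for some $p$. The identity $({\bf g}^{2p+1}_{m,n})^{2q+1}_{m,n}={\bf g}^{(2p+1)(2q+1)}_{m,n}$ noted before the theorem then gives ${\bf v}={\bf g}^{(2p+1)(2q+1)}_{m,n}$, while the length identity
$$2\,\ell({\bf v})+\ell({\bf m})+\ell({\bf n})=(2q+1)\bigl(2\,\ell({\bf a})+\ell({\bf m})+\ell({\bf n})\bigr)$$
determines the exponent $2q+1$ once $\bf g$, $m$, and $n$ are fixed; hence $2P+1=(2p+1)(2q+1)$.

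The plan for $(\star)$ is strong induction on $\ell({\bf v})$. The observation driving the argument is that in any expression ${\bf v}={\bf a}^{2q+1}_{m,n}$ the first and last tiles are \emph{unnegated} copies of $\bf a$ filling the first and last $\ell({\bf a})$ entries of $\bf v$; hence $\bf a$ is simultaneously a prefix and a suffix of $\bf v$, and likewise for $\bf b$. Assuming $\ell({\bf a})\le\ell({\bf b})$, this makes $\bf a$ a prefix and a suffix of $\bf b$, so $\bf b$ is periodic with period $\ell({\bf b})-\ell({\bf a})$. Feeding this periodicity into the rigid structure of $\bf v$ as a $\bf b$-parsing---tiles alternating ${\bf b},\,{\bf b}^{-1},\,{\bf b},\dots$ and connectors alternating ${\bf m}',\,{\bf n}',\,{\bf m}',\dots$---one shows that this $\bf b$-parsing refines to an $\bf a$-parsing with the \emph{same} connectors, so that $m'=m$ and $n'=n$, and that $\bf b$ has the form ${\bf g}^{2\beta+1}_{m,n}$ with $\bf g$ supplied by the inductive hypothesis applied to a strictly shorter two-connector alternating vector. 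The base case $\ell({\bf a})=\ell({\bf b})$ gives ${\bf a}={\bf b}$; then the position of the first connector, together with the length identity above, forces $q=r$, hence $m=m'$, $n=n'$, and one takes ${\bf g}={\bf a}$. When $\bf a$ or $\bf b$ is empty the claim is a purely periodic-word statement, handled by the same reasoning or directly by the Fine--Wilf theorem.

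The step I expect to be the main obstacle is this refinement claim, and within it two technical points. The first is the bookkeeping forced by a connector equal to the zero vector $(0)$: then the two adjacent expanded-partial-quotient blocks of $\bf a$ merge, and the naive ``concatenate the blocks'' description of $\bf v$ must be corrected. The cleanest remedy is to carry out the word-combinatorial argument at the level of the even continued fraction vector of $\bf v$---the uniquely determined sequence of its nonzero even partial quotients, cf.\ Lemma~\ref{special fraction}---where a zero connector is recorded by doubling a single entry rather than by inserting one, and to permit the intermediate generators $\bf g$, $\bf a$, $\bf b$ to be arbitrary even integer vectors rather than elements of ${\cal S}_\text{\rm even}$, exactly as the vector $(2,2,0,2,2,4,2,2)$ occurs in the discussion preceding the theorem. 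The second, and really the heart of the statement, is ruling out two genuinely different connector pairs: this is where the periodicity of the self-similar word $\bf v$ is used decisively, since the local pattern of $\bf v$ around a connector determines that connector, and a single period cannot be compatible with two distinct connector patterns at once.
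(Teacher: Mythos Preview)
This theorem is not proved in the present paper: it is quoted from \cite{GHS} and stated without argument, so there is no in-paper proof to compare your proposal against. Your outline---reducing everything to the single claim $(\star)$ and then proving $(\star)$ by strong induction on $\ell({\bf v})$ via border/periodicity reasoning of Fine--Wilf type---is a natural strategy for a uniqueness statement of this shape, and the reduction you give from $(\star)$ to the theorem (existence of a shortest generator, its uniqueness by applying $(\star)$ to two minimal-length candidates, and the exponent identity from the length formula) is sound. The two technical obstacles you flag, namely block merging at zero connectors and excluding two genuinely distinct connector pairs, are exactly where the real work lies, and your sketch does not yet discharge them. Whether your route agrees with, simplifies, or departs from the argument actually given in \cite{GHS} can only be assessed by consulting that source; nothing in the present paper allows the comparison.
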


When a two-connector alternating vector is expressed as ${\bf g}^{2P+1}_{m,n}$, where $\bf g$ is of minimal length, we say that the expression ${\bf g}^{2P+1}_{m,n}$ is {\it generated} by $\bf g$.

The main result of \cite{GHS} is the following.

\begin{theorem}[\cite{GHS}]\label{GHS main theorem} 
 If $\bf c$ parses with respect to ${\bf a}_i$ for  all $1\le i\le m$, and   ${\bf a}_i$  does not parse with respect to ${\bf a}_j$ if  $i\ne j$  (in other words, the knots $\Phi({\bf a}_i)$ are pairwise incomparable), then there exists ${\bf g}\in {\cal S}_\text{\rm even}$, possibly empty, even integers $r$ and $s$, and  integers $p_i$ such that ${\bf a}_i={\bf g}^{2p_i+1}_{r,s}$ for each $i$. Moreover, if $2P+1$ is the least common multiple of the set $\{2p_i+1\}_{i=1}^m$, then ${\bf c'}={\bf g}^{2P+1}_{r,s}$ parses with respect to each ${\bf a}_i$ and no vector that parses with respect to each ${\bf a}_i$ is shorter than $\bf c'$.
\end{theorem}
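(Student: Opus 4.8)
The plan is to isolate the content of the theorem in a rigidity statement about \emph{two} parsings sharing a common parser, and then to bootstrap from pairs to the whole family. The starting observation is purely syntactic: in the parsing (\ref{parsing}) of $\bf c$ with respect to ${\bf a}_i$, the vector ${\bf a}_i$ is the initial tile, hence a literal prefix of the fixed vector $\bf c$. Since every ${\bf a}_i$ is a prefix of the one vector $\bf c$, the ${\bf a}_i$ are nested; after reindexing by increasing length, ${\bf a}_1$ is a prefix of ${\bf a}_2$, which is a prefix of ${\bf a}_3$, and so on. The incomparability hypothesis now has a sharp combinatorial meaning: for $i<j$ the endpoint of the prefix ${\bf a}_j$ can never fall on a tile boundary of the ${\bf a}_i$-parsing of $\bf c$, for otherwise ${\bf a}_j$ would be an initial string of whole ${\bf a}_i$-tiles and connectors, i.e.\ ${\bf a}_j$ would parse with respect to ${\bf a}_i$ and give $\Phi({\bf a}_j)\ge\Phi({\bf a}_i)$. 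This forced misalignment of the two tilings is the engine of the argument.

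The heart of the proof is the two-vector case: if a single vector parses with respect to two incomparable vectors ${\bf a}$ and ${\bf b}$, with ${\bf a}$ a proper prefix of ${\bf b}$, then there are a common generator ${\bf g}\in{\cal S}_\text{\rm even}$ and common even connectors $r,s$ with ${\bf a}={\bf g}^{2p+1}_{r,s}$ and ${\bf b}={\bf g}^{2q+1}_{r,s}$. I would prove this by a periodicity argument in the alphabet $\{-2,0,2\}$. The ${\bf a}$-parsing exhibits the common parser as a quasi-periodic word built from copies of ${\bf a}$ separated by connectors; the ${\bf b}$-parsing imposes a second such structure whose tile boundaries, by the misalignment noted above, are incommensurate with those of the ${\bf a}$-tiling. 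Matching the two structures on their overlap, and using the rigidity of the expanded even form---entries confined to $\{-2,0,2\}$ together with the rule that an interior $0$ is flanked by equal nonzero entries---forces the two quasi-periods to be commensurable and, via a Fine--Wilf--type conclusion, to descend from one shortest block $\bf g$. The same local analysis shows that only two distinct connector values occur, that they alternate, and that the tiles are never negated; the resulting expressions are two-connector alternating, and Theorem~\ref{two-connector alternating} supplies the uniqueness of $\bf g$, $r$, and $s$.

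Granting the two-vector case, the extension to $m$ vectors is formal. Applying it to the consecutive nested pairs $({\bf a}_i,{\bf a}_{i+1})$ produces a generator and a connector pair for each pair; because the ${\bf a}_i$ are nested prefixes, the uniqueness clause of Theorem~\ref{two-connector alternating} identifies all of these with a single shortest block $\bf g$ and a single connector pair $r,s$, giving ${\bf a}_i={\bf g}^{2p_i+1}_{r,s}$ for every $i$ and proving the first assertion. For the second, put $2P+1=\operatorname{lcm}\{2p_i+1\}_{i=1}^m$ and ${\bf c}'={\bf g}^{2P+1}_{r,s}$. Since $2p_i+1$ divides $2P+1$, the composition identity $\bigl({\bf u}^{2p+1}_{r,s}\bigr)^{2q+1}_{r,s}={\bf u}^{(2p+1)(2q+1)}_{r,s}$ gives
$$
{\bf c}'=\bigl({\bf g}^{2p_i+1}_{r,s}\bigr)^{2Q_i+1}_{r,s}=({\bf a}_i)^{2Q_i+1}_{r,s},
\qquad 2Q_i+1=\tfrac{2P+1}{2p_i+1}.
$$
The right-hand side is literally a parsing of ${\bf c}'$ with respect to ${\bf a}_i$ whose only connectors are $r$ and $s$ and whose tiles are never negated, so ${\bf c}'$ parses with respect to each ${\bf a}_i$.

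It remains to prove minimality, and here the two-vector rigidity is used a second time. The case $m=1$ is immediate, since any parser of ${\bf a}_1$ contains ${\bf a}_1={\bf c}'$ as a prefix. For $m\ge2$, suppose $\bf d$ parses with respect to every ${\bf a}_i$. Then $\bf d$ is a common parser of the incomparable pair ${\bf a}_1,{\bf a}_2$, so by the rigidity statement $\bf d={\bf g}^{2R+1}_{r,s}$ for the same generator and connectors. Because $\bf d$ parses with respect to ${\bf a}_i={\bf g}^{2p_i+1}_{r,s}$, the uniqueness clauses of Theorem~\ref{two-connector alternating} force $2p_i+1$ to divide $2R+1$ for each $i$; hence $2P+1$ divides $2R+1$, and since the length of ${\bf g}^{2R+1}_{r,s}$ increases with the multiplicity, $\ell({\bf d})\ge\ell({\bf c}')$. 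Thus ${\bf c}'$ is shortest. The single genuine obstacle in this scheme is the two-vector rigidity step: converting ``two incomparable vectors share a parser'' into ``they share a generator and a pair of alternating connectors.'' The difficulty is that parsings allow tiles to be reversed (${\bf a}^{-1}$) and negated ($\epsilon_k=\pm1$), so the overlap of the two tilings is not a literal word-periodicity but a quasi-periodicity up to these symmetries; ruling out every configuration except the clean, sign-positive, two-connector alternating one is precisely where the constraints defining ${\cal S}_\text{\rm even}$ must be pushed hardest.
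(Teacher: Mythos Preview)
This theorem is not proved in the present paper: it is quoted from \cite{GHS} and stated without proof, so there is no argument here to compare your proposal against.

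Assessing your outline on its own terms, the strategy is reasonable and you correctly isolate the hard step---the two-vector rigidity converting ``two incomparable vectors share a parser'' into ``they are two-connector alternating over a common generator with a common connector pair''---while acknowledging that you do not carry it out. One additional gap worth noting is in your minimality argument. Your two-vector rigidity, as stated, constrains the \emph{tiles} ${\bf a}_1,{\bf a}_2$ to be of the form ${\bf g}^{2p_i+1}_{r,s}$; it does not immediately force an arbitrary common parser $\bf d$ to be of the form ${\bf g}^{2R+1}_{r,s}$. You assert this without justification. Something further is needed here---either a strengthening of the rigidity statement to cover the parser as well, or an independent argument that any parser of a two-connector alternating vector ${\bf a}_i={\bf g}^{2p_i+1}_{r,s}$ is itself two-connector alternating over the same $\bf g$, $r$, $s$. (The paper's Lemma~\ref{facts} is in the vicinity but goes the other direction: it describes what a two-connector alternating vector can parse with respect to, not what can parse with respect to it.)
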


Note that because of Theorem~\ref{two-connector alternating}, we may assume in Theorem~\ref{GHS main theorem}, that $\bf g$ generates each of the expressions ${\bf g}^{2P+1}_{r,s}$ and  ${\bf g}^{2p_i+1}_{r,s}$ for $1\le i\le m$.

\pagebreak\begin{lemma}\label{facts}\text{}
\begin{enumerate}
\item\label{g nonempty} If $P\in \mathbb N$, $m$ and $n$ are even integers, $\bf g$ is non empty, and   ${\bf g}^{2P+1}_{m,n}$ is generated by $\bf g$, then   ${\bf g}^{2P+1}_{m,n}$ parses with respect to $\bf b$ if and only if either ${\bf b}={\bf g}^{2q+1}_{m,n}$ and $2q+1$ divides $2P+1$, or $\bf g$ parses with respect to $\bf b$.
\item\label{g empty} If $m,n \in 2\mathbb Z-\{0\}$, $p\in \mathbb N$, and ${\bf b} \in {\cal S}_\text{\rm even}$, then $({\bf m},{\bf n})^p$ $d$-fold parses with respect to ${\bf b}$ if and only if ${\bf b}=({\bf m},{\bf n})^q$ and $2p+1=d(2q+1)$.
\end{enumerate}
\end{lemma}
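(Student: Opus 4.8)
The plan is to prove both parts by comparing a hypothetical parsing of the two-connector alternating vector with the block structure it already displays. Write $\mathbf{v}$ for the vector in question: $\mathbf{v}=\mathbf{g}^{2P+1}_{m,n}=(\mathbf{g},\mathbf{m},\mathbf{g}^{-1},\mathbf{n},\dots,\mathbf{g})$ in part (1), and $\mathbf{v}=(\mathbf{m},\mathbf{n})^{p}$ in part (2). By construction $\mathbf{v}$ decomposes into $\mathbf{g}$-blocks (resp.\ empty blocks) separated by the connector \emph{seams} $\mathbf{m}$ and $\mathbf{n}$, while a parsing of $\mathbf{v}$ with respect to $\mathbf{b}$ gives a second decomposition, into $\mathbf{b}$-tiles $\epsilon_i\mathbf{b}^{\pm 1}$ separated by $\mathbf{b}$-connectors $\mathbf{c}_i$; the whole argument is an analysis of how these two decompositions can be positioned relative to each other.

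The two ``if'' directions are routine. If $\mathbf{b}=\mathbf{g}^{2q+1}_{m,n}$ and $2q+1$ divides $2P+1$, write $2P+1=(2q+1)(2r+1)$ and apply the identity $(\mathbf{u}^{2p+1}_{m,n})^{2q+1}_{m,n}=\mathbf{u}^{(2p+1)(2q+1)}_{m,n}$ with $\mathbf{u}=\mathbf{g}$ to get $\mathbf{v}=\mathbf{b}^{2r+1}_{m,n}$; this exhibits $\mathbf{v}$ as a $(2r+1)$-fold parsing with respect to $\mathbf{b}$, and the requirement that $c_i=0$ force $\epsilon_i=\epsilon_{i+1}$ is automatic since in a two-connector alternating vector no tile carries a sign. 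The same computation with $\mathbf{u}$ empty gives the ``if'' direction of (2), with $d=2r+1$ and $2p+1=d(2q+1)$. If instead $\mathbf{g}$ parses with respect to $\mathbf{b}$, then substituting this parsing into each $\mathbf{g}$-block of $\mathbf{v}$ --- the reversed parsing going into the $\mathbf{g}^{-1}$-blocks, which is legitimate because every $\mathbf{b}$-connector is a palindrome --- produces a parsing of $\mathbf{v}$ with respect to $\mathbf{b}$: at each seam a $\mathbf{g}$-block ends in a copy of $\pm\mathbf{b}$ while the reversed parsing of the adjoining $\mathbf{g}^{-1}$-block begins in a copy of $\pm\mathbf{b}^{-1}$ with the same sign, so consecutive tiles alternate between $\mathbf{b}$ and $\mathbf{b}^{-1}$, the intervening connector is exactly $\mathbf{m}$ or $\mathbf{n}$, and the condition on zero connectors holds even when $m$ or $n$ vanishes because the two flanking tiles share a sign.

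For the ``only if'' directions I would begin by establishing the structural fact that the $\mathbf{b}$-decomposition must be \emph{commensurate} with the block structure of $\mathbf{v}$: no copy of $\mathbf{b}$ appearing as a tile can be laid across the block--seam pattern ``off-phase'', i.e.\ so as to cover a fractional number of blocks-and-seams. The tools are the rigidities of $\mathcal{S}_{\text{even}}$: a $\mathbf{b}$-connector has odd length; a vector in $\mathcal{S}_{\text{even}}$, hence $\mathbf{b}$, $\mathbf{g}$ and $\mathbf{v}$, has even length and cannot begin or end with $0$; a seam $\mathbf{m}$ or $\mathbf{n}$ is a single sign-block $\pm(2,0,2,\dots,2)$, none of whose substrings of even length both begins and ends with a nonzero entry; and $\mathbf{g}$ generates $\mathbf{v}$, so by Theorem~\ref{two-connector alternating} the vector $\mathbf{v}$ has no shorter two-connector presentation with connectors $m,n$. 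From commensurateness the argument branches. If no tile crosses a seam, then the $\mathbf{b}$-tiles refine the $\mathbf{g}$-blocks, restricting the parsing to one $\mathbf{g}$-block parses $\mathbf{g}$ (or $\mathbf{g}^{-1}$) with respect to $\mathbf{b}$, and we get the second alternative. Otherwise some tile contains a seam, forcing the copy $\mathbf{b}$ to be itself two-connector alternating with connectors drawn from $\{m,n\}$; uniqueness of the connectors together with the minimality of $\mathbf{g}$ then identifies $\mathbf{b}=\mathbf{g}^{2q+1}_{m,n}$, and a count matching the $2P$ seams of $\mathbf{v}$ against the $2q$ seams internal to each of the $d$ tiles and the one seam forming each of the $d-1$ connectors gives $d(2q+1)=2P+1$, which is the divisibility claim of (1) (and, with $\mathbf{g}$ empty, is the relation $2p+1=d(2q+1)$ of (2)).

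The hard part will be establishing commensurateness, specifically analyzing a tile that straddles one or more seams and showing that it is pinned to a whole number of blocks-and-seams rather than to some off-phase configuration; I expect this to require a somewhat delicate case analysis on the lengths and sign patterns of $\mathbf{g}$, $\mathbf{m}$ and $\mathbf{n}$, used in tandem with the generator minimality from Theorem~\ref{two-connector alternating}. A more black-box alternative would be to feed $\mathbf{v}$ and the pair $\{\mathbf{g},\mathbf{b}\}$ into the structure theorem, Theorem~\ref{GHS main theorem}, when $\mathbf{g}$ and $\mathbf{b}$ are incomparable --- it produces a common generator $\mathbf{h}$ with $\mathbf{g}=\mathbf{h}^{2p_1+1}_{r,s}$ and $\mathbf{b}=\mathbf{h}^{2p_2+1}_{r,s}$, after which Theorem~\ref{two-connector alternating} should force $\mathbf{h}=\mathbf{g}$ and hence contradict incomparability --- but the comparable sub-cases and the need to avoid quoting the present lemma make that route no less delicate.
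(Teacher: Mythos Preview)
Your proposal is sound in outline, but you have the relative difficulty of your two routes backwards. The paper's proof is precisely your ``black-box alternative'': assume $\mathbf{g}$ and $\mathbf{b}$ are incomparable and apply Theorem~\ref{GHS main theorem} (which is quoted from \cite{GHS}, so there is no circularity with the present lemma) to obtain a common $\mathbf{f}$ with $\mathbf{g}=\mathbf{f}^{2p+1}_{j,k}$ and $\mathbf{b}=\mathbf{f}^{2q+1}_{j,k}$. Since $\ell(\mathbf{g})\ne\ell(\mathbf{b})$, compare the first and last $\mathbf{b}$-tile of the parsing against the initial and terminal segments of $\mathbf{v}=\mathbf{g}^{2P+1}_{m,n}$ to force $j=m$ and $k=n$; this contradicts $\mathbf{g}$ being the generator of $\mathbf{v}$. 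Hence $\mathbf{g}$ and $\mathbf{b}$ are comparable, and the two sub-cases fall out: if $\mathbf{b}$ parses with respect to $\mathbf{g}$, then combining this with $\mathbf{v}$ parsing with respect to $\mathbf{b}$ yields $\mathbf{b}=\mathbf{g}^{2q+1}_{m,n}$ with $2q+1\mid 2P+1$; otherwise $\mathbf{g}$ parses with respect to $\mathbf{b}$. The whole argument occupies a short paragraph, and item~\ref{g empty} is then just the degenerate case.

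By contrast, your primary route through ``commensurateness'' --- pinning a straddling $\mathbf{b}$-tile to the block--seam pattern by a direct case analysis on lengths and sign patterns --- is redoing from scratch exactly the structural work that Theorem~\ref{GHS main theorem} already packages. It would succeed, but you are right that it is delicate, and there is no need: the structure theorem is available as an input here, not something to be avoided. Your worry about circularity is the only real misstep in the proposal.
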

\begin{proof} To prove item~\ref{g nonempty}, 
 suppose that $\bf g$ and $\bf b$ are incomparable, that is, neither parses with respect to the other. By \cite{GHS}, it follows that ${\bf g}={\bf f}^{2p+1}_{j,k}$ and ${\bf b}={\bf f}^{2q+1}_{j,k}$ for some vector ${\bf f}\in {\cal S}_\text{\rm even}$ and even integers $j$ and $k$. Because ${\bf g}^{2P+1}_{m,n}$ parses with respect to $\bf b$, and yet $\bf g$ and $\bf b$ are incomparable, we have that $\ell({\bf g})\ne \ell({\bf b})$. Assume that $\ell({\bf g})< \ell({\bf b})$. Comparing the beginning and end of the vector ${\bf g}^{2P+1}_{m,n}$ to the first and last ${\bf b}$-tile in its parsing with respect to $\bf b$ gives that $j=m$ and $k=n$. But now $\bf g$ is not a generator for the expression  ${\bf g}^{2P+1}_{m,n}$. If instead, $\ell({\bf g})> \ell({\bf b})$, then again we obtain $j=m$ and $k=n$ and again reach a contradiction. Thus $\bf g$ and $\bf b$ must be comparable.

 If   $\bf b$ parses with respect to $\bf g$, then because  ${\bf g}^{2P+1}_{m,n}$ parses with respect to $\bf b$, it follows that ${\bf b}={\bf g}_{m,n}^{2q+1}$ and $2q+1$ divides $2P+1$. If not, then $\bf g$ parses with respect to $\bf b$, as desired.

Item ~\ref{g empty} is simply the restatement of item~\ref{g nonempty} in the case where $\bf g$ is empty.
\end{proof}
 
\section{Proof of the Main Result}\label{main theorem section}
In this section we begin with a few  results regarding the  length of a vector and the function $c_m$ before proving Theorem~\ref{main theorem}.
If $\bf a$ admits a $d$-fold parsing with respect to $\bf b$, then it is a simple matter to compare their lengths and obtain the following result.

\begin{lemma}
Suppose that $\bf a, \bf b \in {\cal S}_\text{\rm even}$ and that $\bf a$ admits a $d$-fold parsing with respect to $\bf b$.(Note that this implies $d$ is odd.). Then $\ell({\bf a})\ge d\, \ell({\bf b})+d-1$.
\end{lemma}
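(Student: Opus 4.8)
The plan is to unwind the definition of a $d$-fold parsing given in equation~(\ref{parsing}) and simply count components on both sides. Recall that $\mathbf{a}$ admits a $d$-fold parsing with respect to $\mathbf{b}$ means
\[
\mathbf{a}=(\mathbf{b},\mathbf{c}_1,\epsilon_2\mathbf{b}^{-1},\mathbf{c}_2,\epsilon_3\mathbf{b},\dots,\epsilon_d\mathbf{b}),
\]
where there are $d$ copies of $\mathbf{b}$ (up to sign and reversal, which do not affect length) and $d-1$ connector vectors $\mathbf{c}_i$. Since $d$ is the number of $\mathbf{b}$-tiles and, as noted after~(\ref{parsing}), this number must be odd, the parenthetical claim that $d$ is odd is immediate and needs no further comment.

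First I would observe that $\ell(\epsilon_i\mathbf{b}^{\pm 1})=\ell(\mathbf{b})$ for each tile, since negating entries and reading backwards preserve length. Next I would note that each connector vector $\mathbf{c}_i$, being the vector $\mathbf{c}_i$ associated to an even integer $c_i$ as defined just before~(\ref{parsing}), has length at least $1$: if $c_i=0$ then $\mathbf{c}_i=(0)$ has length $1$, and if $c_i\neq 0$ then $\mathbf{c}_i=\pm(2,0,2,0,\dots,2)$ also has length at least $1$. Therefore, summing lengths across the concatenation,
\[
\ell(\mathbf{a})=\sum_{k=1}^{d}\ell(\mathbf{b})+\sum_{i=1}^{d-1}\ell(\mathbf{c}_i)\ge d\,\ell(\mathbf{b})+(d-1)\cdot 1=d\,\ell(\mathbf{b})+d-1,
\]
which is exactly the claimed inequality.

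There is essentially no obstacle here; the only point requiring a moment's care is the bookkeeping that a $d$-fold parsing has exactly $d$ tiles and $d-1$ connectors (so that the indices in~(\ref{parsing}) run as stated), together with the trivial fact that every connector vector $\mathbf{c}_i$ is nonempty. One could remark that the bound is sharp, achieved when every $c_i=0$, though strictly speaking the side condition ``if $c_i=0$ then $\epsilon_i=\epsilon_{i+1}$'' must then be respected; this is not needed for the inequality itself. I would keep the write-up to a few lines, essentially the displayed computation above preceded by the two observations about tile length and connector length.
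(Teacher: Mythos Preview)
Your proposal is correct and matches the paper's own proof essentially verbatim: both write out the $d$-fold parsing, note that each of the $d$ tiles contributes $\ell(\mathbf{b})$ and each of the $d-1$ connectors has length at least $1$, and sum. Your write-up just spells out the bookkeeping a bit more explicitly than the paper does.
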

\begin{proof} Suppose that 
$${\bf a}=({\bf b}, {\bf m}_1,\epsilon_2 {\bf b}^{-1}, {\bf m}_2, \dots, {\bf m}_{d-1},\epsilon_d {\bf b})$$
where each $m_i$ is even. Since each connector ${\bf m}_i$ has length at least 1, the result follows easily.
\end{proof}

\begin{definition}
For each natural number $m$, define $c_m$ to be the smallest, positive, odd integer having at least $m$ positive, nontrivial, proper divisors. If $m=0$, we define $c_0=3$ for convenience.
\end{definition}

We will need the following observations about $c_m$.
\begin{lemma}\label{facts about c_m}\hspace{1 in}
\begin{enumerate}
\item $c_m\le c_{m+1}$ for all $m\ge 0$.
\item $c_m\le 3 c_{m-1}$ for all $m\ge 1$.
\item For all natural numbers $r$ and $s$, $c_rc_s\ge c_{r+s+1}$.
\end{enumerate}
\end{lemma}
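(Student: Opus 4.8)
The plan is to prove the three statements about $c_m$ directly from the definition, treating them as elementary number-theoretic facts about the divisor-counting function restricted to odd integers.

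For item (1), I would argue that if $N$ has at least $m+1$ nontrivial proper divisors then it certainly has at least $m$ of them, so the set of candidates defining $c_{m+1}$ is a subset of those defining $c_m$; taking minima reverses the inclusion, giving $c_m \le c_{m+1}$. For item (2), the key observation is that $3c_{m-1}$ is odd and that its nontrivial proper divisors include $3$ together with $3d$ for each of the $m-1$ nontrivial proper divisors $d$ of $c_{m-1}$ — and these are all distinct and all proper (since $3d < 3c_{m-1}$ when $d<c_{m-1}$, and $3d\ne 1$) — so $3c_{m-1}$ has at least $m$ nontrivial proper divisors, whence $c_m \le 3c_{m-1}$ by minimality. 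One should double-check the edge case $m=1$: $c_0=3$ by convention, $3c_0=9$ has proper nontrivial divisor $3$, so $c_1\le 9$, consistent with the table.

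For item (3), I would let $a=c_r$ and $b=c_s$, so $a$ has at least $r$ nontrivial proper divisors and $b$ has at least $s$, and aim to show the odd number $ab$ has at least $r+s+1$ nontrivial proper divisors, which forces $c_{r+s+1}\le ab$. The natural construction: take the $r$ nontrivial proper divisors $d_1,\dots,d_r$ of $a$ (these divide $ab$ and are proper since $d_i\le a/2 < ab$, using $b\ge 3$), take the $s$ nontrivial proper divisors $e_1,\dots,e_s$ of $b$, and take $a$ itself — that is $r+s+1$ divisors of $ab$, all nontrivial, all proper (note $a<ab$ since $b\ge 3>1$). The obstacle here is ensuring these are \emph{distinct}: a $d_i$ could equal an $e_j$, or equal $a$ (impossible, $d_i<a$), or $e_j$ could equal $a$. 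The clean fix is to replace the $e_j$ by $a e_j$: then $\{d_1,\dots,d_r\}\cup\{a e_1,\dots,a e_s\}\cup\{a\}$ are genuinely distinct because every $d_i<a\le a e_j$ and $a e_j\ne a$ since $e_j\ge 2$ (no wait — $e_j$ nontrivial just means $e_j\ne 1$, and $e_j$ odd so $e_j\ge 3$), and $a e_j = a e_k$ forces $e_j=e_k$; also every $a e_j \le a(b/2) < ab$ and $a e_j > 1$, so they are proper and nontrivial. Hence $ab$ has at least $r+s+1$ nontrivial proper divisors and item (3) follows.

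The only real subtlety — and the step I expect to require the most care — is the distinctness bookkeeping in item (3), together with checking that the convention $c_0=3$ does not break the inequalities when $r$ or $s$ is $0$ (e.g. $c_0 c_s = 3c_s \ge c_{s+1}$ is exactly item (2), and $c_0 c_0 = 9 \ge c_1 = 9$ holds with equality). I would state these checks briefly rather than belabor them. Everything else is immediate from minimality of $c_m$ over an explicitly exhibited set of divisors.
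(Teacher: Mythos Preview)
Your proof is correct, and for item~(3) it takes a genuinely different route from the paper's. The paper argues via the divisor-count formula: it first observes that each $c_m$ must have a prime factorization using consecutive odd primes starting at $3$, writes $c_r=3^{a_1}\cdots p_j^{a_j}$ and $c_s=3^{b_1}\cdots p_k^{b_k}$, and then expands $\prod(a_i+b_i+1)$ into $3^j$ terms which it partitions to show $\tau(c_rc_s)\ge r+s+3$. Your argument instead exhibits $r+s+1$ nontrivial proper divisors of $c_rc_s$ directly---the $d_i$, the $ae_j$, and $a$ itself---and checks distinctness by the size separation $d_i<a<ae_j$. Your approach is more elementary: it needs no structural fact about the prime shape of $c_m$ and no expansion bookkeeping. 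The paper's approach, in exchange, records the mildly interesting side fact that $c_m$ uses consecutive odd primes from $3$, though that fact is not used elsewhere. For items~(1) and~(2) you and the paper agree in spirit; the paper simply notes that item~(2) is the $r=0$ case of item~(3), which you also observe.
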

\begin{proof} If a positive odd integer has at least $m+1$ proper divisors, then clearly it has at least $m$ such. Hence, $c_m\le c_{m+1}$ for all $m>0$.  It is easy to see that $c_1=9$, so the result is also true when $m=0$.

Note that defining $c_0=3$ makes the second assertion a special case of the third, which we will now prove. 
If the prime factorization of $n$ is $n=p_1^{k_1}p_2^{k_2}\dots p_j^{k_j}$ then the total number of divisors of $n$ is $\displaystyle \prod_{i=1}^j (k_i+1)$. Because this depends only on the exponents $k_1, k_2, \dots, k_j$, and because $c_m$ is the smallest possible, positive, odd integer with at least $m$ positive, nontrivial, proper divisors, we see that the prime factorization of any $c_m$ must employ consecutive odd primes starting at 3.


Let $r$ and $s$ be any nonnegative integers and suppose the prime factorizations of $c_r$ and $c_s$ are
$$c_r=3^{a_1}5^{a_2}\dots p_j^{a_j} \text{ and } c_s=3^{b_1}5^{b_2}\dots p_k^{b_k}.$$ Without loss of generality, we may assume that $k\ge j$. Now the total number of divisors of $c_r\,c_s$ is 
$$\displaystyle \prod_{i=1}^j (a_i+b_i+1)\displaystyle \prod_{i=j+1}^k (b_i+1),$$
where the product from $j+1$ to $k$ is replaced with 1 if $j=k$. When $\displaystyle \prod_{i=1}^j (a_i+b_i+1)$ is multiplied out, there will be $3^j$ terms corresponding to the different ways in which one may choose one of the three summands from each factor. The terms can be placed in three sets, $R, S$, and $T$ as follows. The set $R$ consists of those terms where either $a_i$ or $1$ is chosen from each factor, the set $S$ consists of those terms where either $b_i$ or $1$ is chosen from each factor, and the set $T$ are all the remaining terms. The sets $R$ and $S$ have one term in common, namely $1=1\cdot 1\cdot \dots \cdot 1$. Let $\bar R, \bar S$ and $\bar T$ be the sums of all the terms in each of the sets $R, S$, and $T$ respectively. Thus

$$\displaystyle \prod_{i=1}^j (a_i+b_i+1)=\bar R+\bar S-1+\bar T.$$
But $\bar R=\displaystyle \prod_{i=1}^j (a_i+1)$ and $\bar S=\displaystyle \prod_{i=1}^j (b_i+1)$. Thus  the number of divisors of $c_r \, c_s$ is at least 
$$(r+2+s+2-1+\bar T)\displaystyle \prod_{i=j+1}^k (b_i+1)\ge r+s+3.$$ Hence $c_r \, c_s\ge c_{r+s+1}$. \end{proof}

We are now ready to prove our main result
\setcounter{theorem}{0}
\begin{theorem}\label{main theorem} Suppose  $J$ is a 2-bridge knot which is strictly greater than $m$ distinct nontrivial knots. Then $J$ has at least $c_m$ crossings where $c_m$ is the smallest, positive, odd integer with at least $m$ positive, nontrivial, proper divisors.
\end{theorem}
\setcounter{theorem}{13}

\begin{proof} Suppose $J=\Phi({\hat{\bf a}})$ is strictly greater than $m$ distinct nontrivial knots $K_1, K_2, \dots, K_m$. Because each $K_i$ must be 2-bridge, there exists vectors ${\bf b}_i\in {\cal S}_\text{\rm even}$ with $K_i=\Phi(\hat{\bf b}_i)$ for $1\le i\le m$. We will prove that $\ell({\bf a})\ge c_m-1$ which, when combined with Theorem~\ref{suzuki theorem}, will give the desired result.

 We proceed by induction on $m$. If $m=1$ and ${\bf a}$ admits a $d$-fold parsing with respect to  ${\bf b}_1$, then $d$ is at least $3$ and we have $\ell({\bf a})\ge 3\ell({\bf b}_1)+2\ge 3\cdot2+2\ge c_1-1.$

Assuming the result is true in the case of fewer than $m$ knots,  suppose now that $J$ is greater than $m$ distinct nontrivial knots $\{K_1, K_2, \dots, K_m\}$. 
 Let $A$ be the set of all $K_i$   such that  there does not exist $K_j$ with $J>K_j>K_i$.
 
\noindent{\bf Case I:} Suppose $A$ contains only one knot, say $K_1$.   By our inductive hypothesis, $\ell({\bf b}_1)\ge c_{m-1}-1$ and now $\ell({\bf a})\ge 3\ell({\bf b}_1)+2\ge 3 (c_{m-1}-1)+2\ge 3c_{m-1}-1\ge c_m-1$, using Lemma~\ref{facts about c_m}.

\noindent{\bf Case II:} Suppose $A$ contains two or more knots, say $K_1, K_2, \dots, K_n$ with $n>1$. It must be the case that $K_1, K_2, \dots, K_n$ are pairwise incomparable.  It now follows from Theorem~\ref{GHS main theorem} that there exists ${\bf g}\in {\cal S}_\text{\rm even}$, possibly empty, such that $K_i=\Phi({\bf g}^{2p_i+1}_{r,s})$ for some even integers $r$ and $s$ and nonnegative integers $p_i$ for $1\le i\le n$. Because these knots are incomparable, it follows that $2p_i+1\,|\,2p_j+1$ if and only if $i=j$. Let $2P+1=\text{lcm}(2p_1+1, 2p_2+1, \dots, 2p_n+1)$ and ${\bf a}'={\bf g}^{2P+1}_{r,s}$. If $\bf g$ does not  generate the expression ${\bf g}^{2P+1}_{r,s}$, then we may pass to the unique shortest vector that does. Hence, we may assume that $\bf g$ generates each of the expressions under consideration. It also follows from Theorem~\ref{GHS main theorem} that every vector in ${\cal S}_\text{\rm even}$ that parses with respect to ${\bf g}^{2p_i+1}_{r,s}$ for $1\le i\le n$ is at least as long as ${\bf a}'$. 


We now consider two cases: $\bf g$ is empty or not. Suppose first that $\bf g$ is empty. Rewriting the vectors under consideration, we have $K_i=\Phi(({\bf r},{\bf s})^{p_i})$ for $1\le i\le n$ and we let ${\bf a}'=({\bf r},{\bf s})^P$. Furthermore ${\bf a}'$ also parses with respect to every ${\bf b}_i$ for $n<i\le m$. By Lemma~\ref{facts}, we conclude that ${\bf b}_i=({\bf r},{\bf s})^{p_i}$ for $n<i\le m$ and that $2p_i+1\, | \,2P+1$. The integer $2P+1$ now has $m$ proper factors, $2p_1+1, 2p_2+1, \dots, 2p_m+1$, and hence $2P+1\ge c_m$. 
Thus 
$$
 \ell({\bf a})\ge \ell({\bf a}')\ge \ell(({\bf r},{\bf s})^P)
 \ge P(\ell({\bf r})+\ell({\bf s}))
 \ge 2P
 \ge c_m-1.
$$

Alternatively, suppose that $\bf g$ is nonempty. As before, ${\bf g}^{2P+1}_{r,s}$ parses with respect to each $\bf b_i$ for $n<i\le m$.  By Lemma~\ref{facts}, we conclude that for each $i>n$, either ${\bf b}_i={\bf g}^{2p_i+1}_{r,s}$ or that $\bf g$ parses with respect to ${\bf b}_i$. Assume that the former is true for $K_1, K_2,\dots, K_t$ where $n\le t\le m$ and the latter is true for $K_{t+1}, \dots, K_m$. Of course, if $t=m$ the latter set is empty. Note that $\Phi({\bf g})>K_i$ for all $i>t$. Hence by induction, $\ell({\bf g})\ge c_{m-t}-1$. Also, $2p_1+1, 2p_2+1, \dots, 2p_t+1$ give at least $t-1$ nontrivial, proper factors of $2P+1$ because at most one of them might be 1. Hence $2P+1\ge c_{t-1}$. We now have  
\begin{align*}
\ell({\bf a})&\ge \ell({\bf a}')\\
&\ge \ell({\bf g}^{2P+1}_{r,s})\\
&\ge (2P+1)\ell({\bf g})+P(\ell({\bf r})+\ell({\bf s}))\\
&\ge (2P+1)\ell({\bf g})+2P\\
&\ge (2P+1)(\ell({\bf g})+1)-1\\
&\ge c_{t-1}c_{m-t}-1\\
&\ge c_m-1.
\end{align*} 
\end{proof}

\section{Additional Values of $\text{\bf EK}\boldsymbol{( n )}$}\label{seams}
We begin by determining $\text{\rm EK}(n)$ for $n<45$.  One way to proceed would be to   examine every 2-bridge knot with a given crossing number (by means of computer) to determine the maximum number of strictly smaller nontrivial 2-bridge knots. Presumably this is what Suzuki did to produce the values in (\ref{Suzuki data}). We did this for $n\le 29$ and obtained the same values. Unfortunately, for $n>29$, the time required to examine every 2-bridge knot with crossing number $n$ makes this approach impractical.  

However, Theorem~\ref{main theorem} implies that $\text{\rm EK}(n)<3$ for $n<45$. Thus for each $n<45$, if we simply find one 2-bridge knot whose  crossing number is $n$ and which is strictly greater than two other 2-bridge knots, we will have shown that $\text{\rm EK}(n)=2$. This approach allows us to establish the following theorem, which extends the values of $\text{\rm EK}(n)$ given in (\ref{Suzuki data}).

\begin{theorem}
If $26<n<45$, then $\text{\rm EK}(n)=2$.
\end{theorem}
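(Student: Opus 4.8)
The plan is to exhibit, for each $n$ with $26 < n < 45$, a single 2-bridge knot with crossing number exactly $n$ that is strictly greater than two distinct nontrivial (hence 2-bridge) knots; combined with the bound $\text{\rm EK}(n) < 3$ from Theorem~\ref{main theorem} and the fact that $c_3 = 45$, this forces $\text{\rm EK}(n) = 2$. The natural source of such examples is the two-connector alternating construction: given a short seed vector $\bf g$ and even integers $r,s$, the vector ${\bf g}^{2P+1}_{r,s}$ parses with respect to ${\bf g}^{2p+1}_{r,s}$ whenever $2p+1 \mid 2P+1$ (Lemma~\ref{facts}, item~\ref{g nonempty}), and one also has $\Phi({\bf g}) \le \Phi({\bf g}^{2P+1}_{r,s})$ when $\bf g$ is nonempty. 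So if we pick $2P+1$ to have at least two nontrivial proper divisors — the smallest such odd number being $c_2 = 15$ — or more flexibly mix a divisor relation with the ``parses with respect to $\bf g$'' relation, we obtain a knot dominating at least two others, provided we also check the dominated knots are nontrivial and distinct.

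The key computational step is a crossing-number count. By Theorem~\ref{suzuki theorem}, $\text{\rm cr}({\bf g}^{2P+1}_{r,s})$ equals the sum of absolute values of its entries minus the number of sign changes; for a two-connector alternating vector built from $\bf g$, $r$, $s$ this is a simple closed-form expression in $\ell({\bf g})$, $|r|$, $|s|$, the sign pattern, and $P$. I would first tabulate, for the empty seed, the knots $({\bf r},{\bf s})^P$ with small $|r|,|s|$ and $2P+1 \in \{15, 21, 25, 27, 33, 35, \dots\}$, computing their crossing numbers; for instance $({\bf 2},{\bf 2})^P = K$ built from $\bf g$ empty with $r=s=2$ gives the $(2, 2(2P+1)+1)$-torus-like family, and varying $P$ and the connector sizes sweeps out a range of crossing numbers. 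Where gaps remain, I would switch to a nonempty seed $\bf g$ of length $2$ (so $\Phi({\bf g})$ is already a nontrivial trefoil-type knot) and take $2P+1 = 3$ or $5$, so that ${\bf g}^{2P+1}_{r,s}$ dominates both $\Phi({\bf g})$ and $\Phi({\bf g}^3_{r,s})$ (the latter when $5 = (2P+1)$) — or dominates $\Phi({\bf g})$ via two different routes. One must verify the two dominated knots are genuinely distinct and nontrivial, which is easy since their defining vectors have different lengths or different entries and are manifestly non-empty.

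Concretely, I expect the argument to reduce to a finite table: for each residue/size class of $n$ in $(26,45)$ I name a vector $\bf a \in {\cal S}_\text{\rm even}$, name two proper sub-structures it parses over, and record $\text{\rm cr}({\bf a}) = n$ via Theorem~\ref{suzuki theorem}(1). Since there are only $18$ values of $n$ to cover, and each check is a one-line crossing count plus a one-line divisibility or parsing observation, the whole thing is bookkeeping once the family is chosen well. I would present this as a short table (value of $n$, the vector, the two smaller knots) rather than prose.

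The main obstacle is engineering a \emph{single} parametrized family—or a small number of them—flexible enough to realize every crossing number in the interval $27 \le n \le 44$ while always carrying two independent parsings. Torus knots $T(2,n)$ alone do not suffice: $T(2,n)$ dominates $T(2,d)$ for each nontrivial proper divisor $d$ of $n$, but for $n$ in this range (e.g. $n = 29, 31, 37, 41, 43$ prime, or $n$ with only one suitable divisor) there simply aren't two such divisors. So the real work is in the non-torus two-connector alternating examples with nonempty seed, and in checking that tuning $|r|$, $|s|$, and the sign changes lets us hit the prime and prime-power values of $n$ without losing the second parsing. Once a sufficiently rich family is identified, verifying it for the eighteen required values of $n$ is routine.
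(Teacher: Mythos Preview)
Your overall strategy is correct and matches the paper's: Theorem~\ref{main theorem} together with $c_3=45$ gives $\text{\rm EK}(n)\le 2$ for $n<45$, and the lower bound is established by exhibiting, for each $n$ in the range, one 2-bridge knot with crossing number $n$ that is strictly greater than two nontrivial knots. The paper likewise presents the examples as a table.

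There is, however, a concrete error in your nonempty-seed plan. You propose taking $2P+1=3$ or $5$ so that ${\bf g}^{2P+1}_{r,s}$ dominates both $\Phi({\bf g})$ and $\Phi({\bf g}^3_{r,s})$. But by Lemma~\ref{facts}, when $\bf g$ is a generator the vector ${\bf g}^{2P+1}_{r,s}$ parses with respect to ${\bf g}^{2q+1}_{r,s}$ only if $2q+1\mid 2P+1$; since $3\nmid 5$, the vector ${\bf g}^5_{r,s}$ does \emph{not} dominate $\Phi({\bf g}^3_{r,s})$, and with $2P+1=3$ one gets only the single smaller knot $\Phi({\bf g})$. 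The smallest workable exponent for this scheme is $2P+1=9$, yielding $\Phi({\bf g})$ and $\Phi({\bf g}^3_{r,s})$ as the two smaller knots.

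Your construction also differs from the paper's in a way that leaves the obstacle you identify unresolved. You restrict to two-connector alternating vectors and hope that tuning $|r|,|s|$, the seed, and $2P+1$ will realize every crossing number in $[27,44]$. The paper instead uses a trick it calls \emph{negating between seams}: starting from a vector $\bf c$ that already parses in two ways (e.g.\ the expanded even vector for $T(2,27)$, $T(2,33)$, or $T(2,35)$), one locates the seams---cut points simultaneously compatible with both parsings---and negates the block of $\bf c$ lying between any chosen pair of seams. Both parsings survive, but the number of sign changes in the vector shifts, so by Theorem~\ref{suzuki theorem} the crossing number moves by a controlled amount. In this way a single seed knot generates examples for several consecutive crossing numbers, and only six seeds (three torus knots and three found by short search) suffice to cover all eighteen values. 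Your two-connector alternating families are more rigid in which crossing numbers they hit, and you have not shown they actually cover the full range.
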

\begin{proof} In Table~\ref{27 to 44 examples} we list one or more 2-bridge knots for each crossing number $n$ from 27 to 44. It is easy to check that each of these knots is strictly greater than two nontrivial knots by first finding the expanded even sequence and then checking that it parses two different ways. Because $\text{\rm EK}(n)$ cannot be 3 in this range, it must therefore be equal to 2.
\end{proof}

\begin{table}[htp]
\begin{center}
\begin{tabular}{llclc||llcl}
$n$&$p/q$&{\hskip 10pt}&$p/q$&{\hskip 10pt}&$n$&$p/q$&{\hskip 10pt}&$p/q$\\
\hline\hline
& & & & & & \\
 \cline{2-2}\cline{7-7}
27&\multicolumn{1}{|l|}{1/27}&&&&35&\multicolumn{1}{|l|}{1/35}\\
28&\multicolumn{1}{|l|}{17/315}&&&&36&\multicolumn{1}{|l|}{29/595}\\\cline{4-4}\cline{9-9}
29&\multicolumn{1}{|l|}{35/621}&&\multicolumn{1}{|l|}{19/351}&&37&\multicolumn{1}{|l|}{349/5075}&&\multicolumn{1}{|l|}{91/1647}\\\cline{7-7}
30&\multicolumn{1}{|l|}{577/5499}&&\multicolumn{1}{|l|}{35/639}&&38&&&\multicolumn{1}{|l|}{107/1935}\\
31&\multicolumn{1}{|l|}{1189/10395}&&\multicolumn{1}{|l|}{53/945}&&39&&&\multicolumn{1}{|l|}{125/2241}\\\cline{2-2}
32&&&\multicolumn{1}{|l|}{883/8415}&&40&&&\multicolumn{1}{|l|}{2107/20079}\\\cline{2-2}\cline{7-7}
33&\multicolumn{1}{|l|}{1/33}&&\multicolumn{1}{|l|}{1801/15903}&&41&\multicolumn{1}{|l|}{127/2295}&&\multicolumn{1}{|l|}{4249/37935}\\\cline{4-4}\cline{9-9}
34&\multicolumn{1}{|l|}{23/495}&&&&42&\multicolumn{1}{|l|}{143/2583}\\
35&\multicolumn{1}{|l|}{461/5313}&&&&43&\multicolumn{1}{|l|}{161/2889}\\\cline{2-2}
&&&&&44&\multicolumn{1}{|l|}{2719/25911}\\\cline{7-7}
\end{tabular}
\end{center}
\caption{Examples showing $\text{\rm EK}(n)=2$ for $26<n<45$.}

\label{27 to 44 examples}
\end{table}%

The knots given in Table~\ref{27 to 44 examples} appear in six sets, with each set surrounded by a box. In each set, any one of the entries can be used to produce the other entries in the set by means of a construction we call ``negating between seams,''  which we  describe in the next paragraph. Three of the six sets were found by considering the  $(2, 27), (2, 33)$ and $(2, 35)$-torus knots. The other three sets were found by searching 2-bridge knots of a given crossing number until one was found whose expanded even sequence parsed in two ways. That knot was then used to generate the other knots in that box.

To describe this construction, suppose that ${\bf a}, {\bf b}$, and $\bf c$ are all in ${\cal S}_\text{\rm even}$ and that $\bf c$ parses with respect to both $\bf a$ and $\bf b$. A {\it seam} of $\bf c$ is a place to cut $\bf c$ into two pieces so that with respect to each parsing, each piece is composed of a whole number of tiles and connectors. We illustrate the situation using $T(2,27)=K_{1/27}$. The expanded even sequence for this knot, {\bf c}, parses with respect to both {\bf a} and {\bf b} as shown below.  

$${\bf c}=\underbrace{\overbrace{2,-2}^{\bf a}, 2, \overbrace{-2,2}^{{\bf a}^{-1}}, -2,\overbrace{2,-2}^{\bf a}}_{\bf b}, 2,
\underbrace{\overbrace{-2,2}^{{\bf a}^{-1}}, -2, \overbrace{2,-2}^{\bf a}, 2,\overbrace{-2,2}^{{\bf a}^{-1}}}_{{\bf b}^{-1}}, -2,
\underbrace{\overbrace{2,-2}^{\bf a}, 2, \overbrace{-2,2}^{{\bf a}^{-1}}, -2,\overbrace{2,-2}^{\bf a}}_{\bf b} 
$$
There are four seams, located at positions 8, 9, 17, and 18, which cut $\bf c$ into five pieces.
Between any pair of seams, each parsing consists of a whole number of tiles and connectors. Thus, if we negate the portion of $\bf c$ that lies between any two seams (or before the first seam or after the last seam) to obtain a new vector ${\bf d}$, then ${\bf d}$ will still parse with respect to both $\bf a$ and $\bf b$. When going from $\bf c$ to $\bf d$, we will not change the sum of the absolute values of the entries of the vector, but the number of sign changes may change. Thus, by Theorem~\ref{suzuki theorem},  the crossing number of $\Phi(\hat{{\bf d}})$ will differ from $\Phi(\hat{\bf c})$ by the change in the number of sign changes. In this example, the sum of the absolute values of the components of $\bf c$ is 52 and the number of sign changes is 25, which is the most possible. Hence by Theorem~\ref{suzuki theorem}, the crossing number of this 2-bridge knot is 27. Suppose $\bf d$ is obtained from $\bf c$ by negating everything after the last seam. This will give the knot $K_{17/315}$ with crossing number $52-25=28$. Similarly, negating between the third and fourth seams gives $K_{35/621}$, between the second and third and after the fourth gives $K_{577/5499}$, and lastly, between the first and second and between the third and fourth gives $K_{1189/10395}$.

Finally, we can comment on a few values of  $\text{\rm EK}(n)$ for $45\le n\le 105$. Corollary~\ref{EK inverse} implies that $\text{\rm EK}(45)=4$ and that $\text{\rm EK}(105)=6$. The $(2,63)$-torus knot is strictly greater than four torus knots and hence $\text{\rm EK}(63)=4$. Using the $(2,45)$ and $(2,63)$-torus knots, negating between seams give examples that show that $\text{\rm EK}(n)=3 \text{ or } 4$ for $n=46, 47, 48, 49, 64, 65, 66, 67$.

We close with a proof of Theorem~\ref{lower bound theorem}.
\setcounter{theorem}{2}
\begin{theorem} For all $N\ge 3n\ge 9$, we have $\text{\rm EK}(N)\ge \text{\rm EK}(n)$. 
\end{theorem}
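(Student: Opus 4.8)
The plan is: given a 2-bridge knot $J$ of crossing number $n$ that is strictly greater than $m:=\text{\rm EK}(n)$ distinct nontrivial knots $K_1,\dots,K_m$, to produce for every $N\ge 3n$ a 2-bridge knot of crossing number $N$ that is again strictly greater than $K_1,\dots,K_m$; this yields $\text{\rm EK}(N)\ge m$. We may assume $m\ge1$, the statement being vacuous otherwise. Fix ${\bf a}\in{\cal S}_\text{\rm even}$ with $J=\Phi(\hat{\bf a})$ and $\text{\rm cr}({\bf a})=n$; since $J$ is nontrivial, ${\bf a}$ is nonempty. The core of the argument is to construct, for each $N\ge 3n$, a vector ${\bf w}\in{\cal S}_\text{\rm even}$ that admits a $3$-fold parsing with respect to ${\bf a}$ and satisfies $\text{\rm cr}({\bf w})=N$. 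Given this, the parsing criterion of Ohtsuki, Riley and Sakuma gives $\Phi(\hat{\bf w})\ge\Phi(\hat{\bf a})=J\ge K_i$ for every $i$; moreover $\Phi(\hat{\bf w})$ is nontrivial, since it maps onto the nontrivial knot $K_1$, and it is distinct from each $K_i$, for $\Phi(\hat{\bf w})=K_i$ would give $K_i\ge J\ge K_i$ and hence $J=K_i$ by the antisymmetry of $\ge$ on prime knots established in the Introduction, contradicting $J>K_i$. Thus $\Phi(\hat{\bf w})$ is strictly greater than the $m$ distinct nontrivial knots $K_1,\dots,K_m$, and $\text{\rm EK}(N)\ge m=\text{\rm EK}(n)$.

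To build ${\bf w}$, let $\alpha$ and $\omega$ be the first and last entries of ${\bf a}$ (each is $\pm2$), write $S$ for the sum of the absolute values of the entries of ${\bf a}$, so that ${\bf a}$ has $\sigma:=S-n$ sign changes by Theorem~\ref{suzuki theorem}, and put $t=N-3n\ge0$. When $t$ is even, take
\[ {\bf w}=({\bf a},\,{\bf t},\,{\bf a}^{-1},\,{\bf 0},\,{\bf a}), \]
where ${\bf t}=(\omega,0,\omega,0,\dots,\omega)$ has $t/2$ entries equal to $\omega$ (read ${\bf t}={\bf 0}$ if $t=0$). When $t$ is odd, which forces $N>3n$ and hence $t\ge1$, take
\[ {\bf w}=({\bf a},\,{\bf t}',\,-{\bf a}^{-1},\,{\bf 0},\,-{\bf a}), \]
where ${\bf t}'$ is any connector the absolute values of whose entries sum to $t+1$. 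I would then verify: (i) ${\bf w}\in{\cal S}_\text{\rm even}$---its length is even, its first and last entries are $\pm2$, and each $0$ occurring in ${\bf w}$, whether internal to a block ${\bf a}$, ${\bf a}^{-1}$, ${\bf t}$, ${\bf t}'$ or equal to the displayed ${\bf 0}$, has equal nonzero neighbors; (ii) the displayed expression is a $3$-fold parsing of ${\bf w}$ with respect to ${\bf a}$---the tiles are ${\bf a}$, $\pm{\bf a}^{-1}$, $\pm{\bf a}$, and the condition ``$c_i=0\Rightarrow\epsilon_i=\epsilon_{i+1}$'' has to be checked only at ${\bf 0}$-connectors, across each of which both flanking tiles carry the sign $+$ (if $t$ is even) or both carry $-$ (if $t$ is odd); (iii) $\text{\rm cr}({\bf w})=N$, via Theorem~\ref{suzuki theorem}: the absolute values of the entries of ${\bf w}$ sum to $3S+t$ when $t$ is even and to $3S+t+1$ when $t$ is odd, while ${\bf w}$ has $3\sigma$ sign changes when $t$ is even---the entries of ${\bf t}$ agree in sign with $\omega$, so no sign change occurs at either end of ${\bf t}$---and $3\sigma+1$ sign changes when $t$ is odd---the two ends of the block ${\bf a},{\bf t}',-{\bf a}^{-1}$ carry opposite signs, so regardless of the sign of ${\bf t}'$ exactly one sign change occurs there. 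Subtracting gives $\text{\rm cr}({\bf w})=3(S-\sigma)+t=3n+t=N$ in either case.

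The single nonroutine point is the parity correction used when $t$ is odd. A two-connector alternating vector ${\bf a}^{2q+1}_{r,s}$ built directly from ${\bf a}$ always has crossing number $\equiv n\pmod2$: its connectors have even entry-sums and are flanked by tiles whose abutting entries agree in sign, so the two junctions around each connector contribute sign changes in pairs, and one finds $\text{\rm cr}({\bf a}^{2q+1}_{r,s})\equiv(2q+1)(S-\sigma)\equiv n\pmod2$. Consequently no such vector realizes an $N$ with $N\not\equiv n\pmod2$, and the remedy is to negate the second and third tiles in the $3$-fold parsing---an instance of the ``negating between seams'' construction of this section---which breaks the pairing at one junction and changes the crossing number by exactly $1$. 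I expect this parity observation to be the main obstacle; everything else reduces to Suzuki's crossing-number formula (Theorem~\ref{suzuki theorem}) together with a direct check against the definition of ${\cal S}_\text{\rm even}$.
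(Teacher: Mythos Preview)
Your proposal is correct and takes essentially the same approach as the paper: construct a $3$-fold parsing $({\bf a},*,\pm{\bf a}^{-1},{\bf 0},\pm{\bf a})$ with the first connector chosen to realise the target crossing number, and flip the sign of the last two tiles when $N-3n$ is odd to effect the parity correction. Your version is more thorough---you verify membership in ${\cal S}_\text{even}$, count sign changes explicitly via Theorem~\ref{suzuki theorem}, and secure strict inequality $\Phi(\hat{\bf w})>K_i$ by antisymmetry, all of which the paper leaves to the reader---but the construction and the idea are the same.
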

\begin{proof} Let $n$ be any natural number and $\bf c$ a vector that has crossing number $n$ and parses $\text{\rm EK}(n)$ ways. We will use $\bf c$ to build a vector $\bf d$ that has any crossing number $N\ge 3n$ and which also parses in as many ways as {\bf c}. This will give that $\text{\rm EK}(N)\ge \text{\rm EK}(n)$.

 If $N-3n$ is even, choose $m$ so that $|m|=N-3n$ and, if not zero, $m$ has the same sign as the last entry of ${\bf c}$.   Let ${\bf d}=({\bf c}, {\bf m}, {\bf c}^{-1}, 0, {\bf c})$. Using Theorem~\ref{suzuki theorem}, we find that the crossing number of $\bf d$ is $3n+|m|=N$.  If $N-3n$ is odd, then let $m=N-3n+1$ and  ${\bf d}=({\bf c}, {\bf m}, -{\bf c}^{-1}, 0, -{\bf c})$. Theorem~\ref{suzuki theorem} now implies the crossing number of $\bf d$ is $3n+m-1=N$. In either case, if $\bf c$ parses with respect to $\bf a$, then so does $\bf d$.  
\end{proof}
 

\end{document}